\newcommand{\R}{{\mathbb R}}
\newcommand{\N}{{\mathbb N}}
\newcommand{\cM}{{\mathcal M}}
\newcommand{\cC}{{\mathcal C}}
\newcommand{\cE}{{\mathcal E}}
\newcommand{\bM}{{\mathbb M}}
\newcommand{\e}{\varepsilon}
\newcommand{\al}{\alpha}
\newcommand{\be}{\beta}
\newcommand{\ka}{\kappa}
\newcommand{\la}{\lambda}
\newcommand{\vp}{\varphi}
\newcommand{\dist}{\operatorname{dist}}
\newcommand{\supp}{\operatorname{supp}}
\newcommand{\D}{\nabla}
\newcommand{\p}{\partial}
\newtheorem{theorem}{Theorem}
\theoremstyle{plain}
\newtheorem{corollary}{Corollary}
\newtheorem{definition}{Definition}
\newtheorem{lemma}{Lemma}
\newtheorem{remark}{Remark}
\newtheorem{proposition}{Proposition}
\numberwithin{equation}{section}
\begin{document}

\author{Seongmin Jeon}
\address[Seongmin Jeon]
{Department of Mathematics and Statistics \newline 
\indent University of Jyväskylä \newline 
\indent P.O. Box 35, 40014, Jyväskylä, Finland} 
\email{seongmin.s.jeon@jyu.fi}

\author{Henrik Shahgholian}
\address[Henrik Shahgholian]
{Department of Mathematics \newline 
\indent KTH Royal Institute of Technology \newline 
\indent 100 44 Stockholm, Sweden} 
\email{henriksh@kth.se}

\title[Convexity for a parabolic fully nonlinear free boundary problem]
{Convexity for a parabolic fully nonlinear free boundary problem with singular term }

\date{\today}
\keywords{$\al$-parabolically quasiconcavity, parabolic fully nonlinear equation, quasiconcave envelope} 

\subjclass[2020]{35R35, 52A01} 

\thanks{S. Jeon was supported by the Academy of Finland grant 347550. H. Shahgholian was supported by Swedish Research Council grant nr 2021-03700. This work was partially done when H. Shahgholian was  spending  time at YSU (Armenia), whose support and  hospitality  is acknowledged. \\   {\it   To the memory of Marek Fila, a scientific luminary we dearly miss}}

\begin{abstract}
In this paper, we study a parabolic free boundary problem in an exterior domain
\begin{align*}
    \begin{cases}
        F(D^2u)-\partial_tu=u^a\chi_{\{u>0\}}&\text{in }(\R^n\setminus K)\times(0,\infty),\\
        u=u_0&\text{on }\{t=0\},\\
        |\D u|=u=0&\text{on }\partial\Omega\cap(\R^n\times(0,\infty)),\\
        u=1&\text{in }K\times[0,\infty).
    \end{cases}
\end{align*}
Here, $a$ belongs to the interval $(-1,0)$, $K$ is a (given) convex compact set in $\R^n$, $\Omega=\{u>0\}\supset K\times(0,\infty)$ is an unknown set, and $F$ denotes a fully nonlinear operator. Assuming a suitable condition on the initial value $u_0$, we prove the existence of a nonnegative quasiconcave solution to  the aforementioned problem, which exhibits monotone non-decreasing behavior over time.
\end{abstract}

\maketitle 

\tableofcontents

\section{Introduction}
\subsection{Background}


Let $K$ be a given  compact convex set in $\R^n$  ($n \geq 1$), $u_0 $ a nonnegative  function with compact support in $\R^n$ with $u_0 = 1$ on $K$. 
We consider the  problem of finding a solution $u$ to  
\begin{align}\label{eq:pde}
    \begin{cases}
        F(D^2u)-\partial_tu=g(u) &\text{in }(\R^n\setminus K)\times(0,\infty),\\
        u=u_0&\text{on }\{t=0\},\\
        |\D u|=u=0&\text{on }\partial\Omega\cap(\R^n\times(0,\infty)),\\
        u=1&\text{in }K\times[0,\infty),
    \end{cases}
\end{align}
where  $\Omega = \{ u >0\}$ is  an open set in $\R^{n}\times(0,\infty)$ such that $K\times(0,\infty)\subset\Omega$,
$F$ is a fully nonlinear operator, and $g(s)=s^a\chi_{\{s>0\}}$ for some $a\in(-1,0)$. Notice that $g(u)=g(u)\chi_{\{u>0\}}$.

Free boundary problems featuring singular right-hand sides like $u^a$, where $-1<a<0$, were examined by Alt-Phillips \cite{AltPhi86} for the Laplacian case. Araújo-Teixeira \cite{AraTei13} extended the analysis to problems involving fully nonlinear operators. Recently, Araújo-Sá-Urbano \cite{AraSaUrb23} addressed the singular fully nonlinear free boundary problem in the parabolic setting. Convexity configurations in parabolic partial differential equations have been extensively explored in the literature; refer to Definition \ref{Def:1} for various convexity concepts. Noteworthy studies include parabolic quasiconcavity in \cite{IshSal10,IshSal11}, space-time quasiconcavity in \cite{Bor82,CheMaSal19}, and a more general notion of parabolic power concavity in \cite{IshSal14a,IshSal14b}. The concept of a "spatially quasiconvex envelope" in the parabolic setting is discussed in \cite{KagLiuMit23}. Additional references encompass \cite{Bor96, CheMaSal19, DiaKaw93, IshSal10, IshSal11, IshSal14a, IshSal14b, KagLiuMit23, LeePetVaz06, Pet02}.

The authors, in their recent work \cite{JeoSha23}, investigated the obstacle-type convexity problem involving the fully nonlinear operator and the $p$-Laplacian. In this paper, we extend specific results from the elliptic to the parabolic setting. The main objective of this study is to show  the existence of a space-time quasiconcave solution to the singular parabolic fully nonlinear free boundary problem \eqref{eq:pde}.

\begin{definition}\label{Def:1}
    Let $u$ be a continuous function in $\R^n\times(0,\infty)$ and $\al\in(0,\infty)$ be a constant.
  \begin{enumerate}
  \item We say that $u$ is \emph{spatially quasiconcave} if 
    $$
    u((1-\rho)x_0+\rho x_1,t)\ge\min\{u(x_0,t),u(x_1,t)\}
    $$
    for every $x_0,x_1\in \R^n$, $t>0$ and $\rho\in (0,1)$.
      \item $u$ is said to be \emph{$\al$-parabolically quasiconcave} if
    $$
    u\left((1-\rho)x_0+\rho x_1,\left((1-\rho)t_0^\al+\rho t_1^\al\right)^{1/\al}\right)\ge \min\{u(x_0,t_0),u(x_1,t_1)\}
    $$
    for every $x_0,x_1\in \R^n$, $t_0,t_1>0$ and $\rho\in (0,1)$.
    \item $u$ is called to be \emph{space-time quasiconcave} if it is $1$-parabolically quasiconcave, i.e.,
    $$
    u\left((1-\rho)z_0+\rho z_1\right)\ge \min\{u(z_0),u(z
    _1)\}
    $$
    for every $z_0,z_1\in \R^n\times(0,\infty)$ and $\rho\in (0,1)$.
  \end{enumerate}  
\end{definition}

The following notion of \emph{parabolic convexity} of sets was introduced in \cite{Bor96}.

\begin{definition}
    We say that a set $E\subset \R^{n}\times[0,\infty)$ is \emph{$\al$-parabolically convex} if
    $$
    \left((1-\rho)x_0+\rho x_1, \left((1-\rho) t_0^\al+\rho t_1^\al\right)^{1/\al}\right)\in E
    $$
    whenever $(x_0,t_0), (x_1,t_1)\in E$ and $\rho\in (0,1)$.
\end{definition}

\begin{remark} It is easy to see that the $\al$-parabolically quasiconcavity of $u$ is equivalent to the space-time quasiconcavity of $\tilde u(x,t):=u(x,t^{1/\al})$. Moreover, $u$ is $\al$-parabolically quasiconcave if and only if its super-level sets $\{(x,t)\in \R^n\times[0,\infty)\,:\,u(x,t)\ge l\}$ are $\al$-parabolically convex for every $l\in\R$. Clearly, $u$ is space-time quasiconcave if and only if its super-level sets are $(n+1)$-dimensional convex sets.
\end{remark}

\subsection{Main results}
Let $S=S(n)$ be the space of $n\times n$ symmetric matrices. For constants $\Lambda\ge \lambda>0$, we let $\cM^+_{\la,\Lambda}$, $\cM^-_{\la,\Lambda}$ be the extremal Pucci operators 
$$
\cM^+_{\la,\Lambda}(M)=\Lambda\sum_{e_i>0}e_i+\la\sum_{e_i<0}e_i,\qquad \cM^-_{\la,\Lambda}(M)=\la\sum_{e_i>0}e_i+\Lambda\sum_{e_i<0}e_i,
$$
where $e_i$'s are eigenvalues of $M\in S$. We assume $F:S\to \R$ satisfies
\begin{align}
    \label{eq:assump-fully-nonlinear}
    \begin{cases}
    \text{- }F\text{ is uniformly elliptic, i.e., there are constants $\Lambda\ge\la>0$ such that}\\
    \qquad \cM^-_{\la,\Lambda}(M-N)\le F(M)-F(N)\le \cM^+_{\la,\Lambda}(M-N)\text{ for every } M,N\in S,\\
    \text{- $F$ is convex},\\
    \text{- $F$ is homogeneous of degree $1$. That is, }    \  F(rM)=rF(M) \text{ for all }   \ r>0, \ M\in S.
    \end{cases}
\end{align}

Regarding the initial value $u_0:\R^n\to\R$, we assume
\begin{align}\label{eq:assump-initial}
\begin{cases}
    \text{- $u_0$ is nonnegative, quasiconcave and compactly supported in $\R^n$},\\
    \text{- }u_0=1\quad\text{in }K,\\
    \text{- }F(D^2u_0)\ge g(u_0) \quad\text{in }\{u_0>0\}\setminus K.
\end{cases}
\end{align}

The following is the central result of this paper.

\begin{theorem}
    \label{thm:main-exterior}
    Let $K\subset\R^n$ be a compact convex set with a nonempty interior and $\al\in[1,\infty)$. Suppose that the initial data $u_0$ satisfies \eqref{eq:assump-initial}.
    Then there exists a nonnegative space-time quasiconcave function $u$, which is nondecreasing in time and satisfies \eqref{eq:pde} with $\Omega=\{u>0\}$ and
    $\{u_0>0\}\times(0,\infty)\subset\Omega\subset B_{R_0}\times(0,\infty)$ for some $R_0>0$.
\end{theorem}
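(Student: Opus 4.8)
Our plan is to obtain $u$ as an iterated limit of solutions of a regularized problem — the singular term $g(s)=s^{a}$ is smoothed, the exterior domain $\R^{n}\setminus K$ is truncated to a large ball $B_{R}$, and the time axis is cut at a finite $T$ — and to carry space-time quasiconcavity through the construction by repeatedly applying the quasiconcave-envelope operation, as in the elliptic scheme of \cite{JeoSha23}. (The parameter $\al$ enters only through the choice of parabolic scaling; we describe $\al=1$, i.e.\ space-time quasiconcavity, the general case reducing to it via $\tilde u(x,t)=u(x,t^{1/\al})$.) For $\e\in(0,1)$ fix a smooth non-increasing $g_{\e}\colon[0,\infty)\to(0,\infty)$ with $g_{\e}(s)=s^{a}$ for $s\ge\e$ and $g_{\e}\equiv\e^{a}$ near $0$, so that $g_{\e}\ge g$ and $g_{\e}\downarrow g$ on $(0,\infty)$; fix a smooth quasiconcave $u_{0,\e}$ with $u_{0,\e}=1$ on $K$, $\spt u_{0,\e}\subset B_{R}$, $u_{0,\e}\to u_{0}$, and $F(D^{2}u_{0,\e})\ge g_{\e}(u_{0,\e})$ on $\{u_{0,\e}>0\}\setminus K$ (possible from \eqref{eq:assump-initial} by a standard mollification). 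For $R$ large and $T>0$, solve
\begin{equation*}
\begin{cases}
F(D^{2}w)-\p_{t}w=g_{\e}(w)&\text{in }(B_{R}\setminus K)\times(0,T),\\
w=u_{0,\e}&\text{on }\{t=0\},\\
w=1&\text{on }K\times[0,T],\\
w=0&\text{on }\p B_{R}\times[0,T];
\end{cases}
\end{equation*}
since $g_{\e}$ is bounded and Lipschitz and $F$ satisfies \eqref{eq:assump-fully-nonlinear}, this has a unique solution $w_{\e,R,T}$, which by Krylov--Safonov and the Evans--Krylov theorem (here convexity of $F$ is used) is $C^{2+\gamma,\,1+\gamma/2}$ in the interior, and $w_{\e,R,T}\ge0$ by comparison.

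I would then record uniform bounds and time-monotonicity. Since $-1<a<0$, the Alt--Phillips dead-core construction yields a \emph{stationary} supersolution $W$: nonnegative, quasiconcave, compactly supported, with $W=1$ on $K$, $\spt W\subset B_{R_{0}}$, $F(D^{2}W)\le g(W)\le g_{\e}(W)$ on $\{W>0\}\setminus K$, and $W\ge u_{0,\e}$. Comparison gives $0\le w_{\e,R,T}\le W$, so $\spt w_{\e,R,T}(\cdot,t)\subset\overline{B_{R_{0}}}$ for all $t$, uniformly in $\e$, $R>R_{0}$, $T$. As $u_{0,\e}$ is a stationary subsolution, comparison with its constant-in-time extension gives $w_{\e,R,T}\ge u_{0,\e}$, and then comparing $w_{\e,R,T}(\cdot,\cdot+h)$ with $w_{\e,R,T}$ yields $\p_{t}w_{\e,R,T}\ge0$.

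The core step is the propagation of quasiconcavity, via the lemma: if $v$ is a viscosity subsolution of $F(D^{2}v)-\p_{t}v\ge g_{\e}(v)$ in a space-time convex set, then so is its space-time quasiconcave envelope $v^{\star}$, whose superlevel sets are the convex hulls of those of $v$. Locally: if $\phi$ touches $v^{\star}$ from above at an interior $z_{0}$, write $z_{0}=\sum_{i=0}^{m}\rho_{i}z^{i}$ with $m\le n+1$, $\rho_{i}>0$, $v(z^{i})\ge v^{\star}(z_{0})$ (Carathéodory); the translates $v_{i}(z):=v(z+z^{i}-z_{0})$ are subsolutions, hence so is $\min_{i}v_{i}$, and since $v^{\star}\ge\min_{i}v_{i}$ everywhere and $\phi\ge v^{\star}$ near $z_{0}$, necessarily $\min_{i}v_{i}(z_{0})=v^{\star}(z_{0})=\phi(z_{0})$, so $\phi$ touches $\min_{i}v_{i}$ from above and the subsolution inequality transfers to $v^{\star}$. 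Moreover $v^{\star}$ inherits the boundary data of $v$: because the boundary points of $B_{R}$ are extreme points and, in the space-time direction, a convex combination lands at $\{t=0\}$ only if all its active points do, one gets $v^{\star}=0$ on $\p B_{R}\times(0,T)$ and $v^{\star}(\cdot,0)=(v(\cdot,0))^{\star}$, while $v^{\star}=1$ on $K\times[0,T]$ follows from $v^{\star}\ge v$ and $v^{\star}\le W$. Given this, iterate: let $P[v]$ solve the regularized problem with the \emph{original} lateral and initial data (so $P$ is order preserving and stable under increasing limits), set $v^{(0)}:=u_{0,\e}$ (extended constantly in $t$) and $v^{(k+1)}:=\bigl(P[v^{(k)}]\bigr)^{\star}$. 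Then $(v^{(k)})$ is an increasing sequence of quasiconcave subsolutions (each with the original boundary data, by the remarks above), bounded above by $W$; its limit $w_{\e,R,T}$ is space-time quasiconcave, a subsolution, and, by stability, satisfies $w_{\e,R,T}=\bigl(P[w_{\e,R,T}]\bigr)^{\star}\ge P[w_{\e,R,T}]$; being a subsolution, $w_{\e,R,T}\le P[w_{\e,R,T}]$ by comparison, so $w_{\e,R,T}$ solves the regularized PDE and is space-time quasiconcave.

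Finally I would pass to the limit: $T\to\infty$ (monotone limit, using $\p_{t}w_{\e,R,T}\ge0$ and the bound $W$), then $R\to\infty$ (harmless, as supports lie in $B_{R_{0}}$), then $\e\to0$. Interior $C^{2+\gamma,1+\gamma/2}$ estimates in $\{w_{\e}>\e\}$ together with the regularity theory for the singular problem (\cite{AltPhi86,AraTei13,AraSaUrb23}) give local uniform convergence to a limit $u$ solving $F(D^{2}u)-\p_{t}u=u^{a}$ in $\{u>0\}\setminus K$; since $-1<a<0$, the optimal free-boundary growth $u\sim\dist^{2/(1-a)}$ has exponent $>1$, which forces $|\D u|=u=0$ on $\p\Omega$, $\Omega=\{u>0\}$. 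Nonnegativity, space-time quasiconcavity, monotonicity in $t$, $u=1$ on $K$, $u=u_{0}$ at $\{t=0\}$, and $\{u_{0}>0\}\times(0,\infty)\subset\Omega\subset B_{R_{0}}\times(0,\infty)$ all survive the limits. The step I expect to be the main obstacle is the envelope lemma — establishing that the convex-hull operation preserves both the subsolution property and the boundary conditions, and in particular handling the parabolic time direction (so that no mass is created at $t\le0$) and the degenerate Carathéodory configurations, up to the fixed boundary $K\times(0,\infty)$ and across the moving free boundary; the interior viscosity computation is comparatively routine.
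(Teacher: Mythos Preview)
Your overall architecture (regularize, prove quasiconcavity for the regularized problem via the quasiconcave envelope, pass to the limit) matches the paper's. The decisive gap is in the envelope lemma. Your argument reads: the translates $v_i$ are subsolutions, ``hence so is $\min_i v_i$'', and then a test function touching $v^\star$ from above touches $\min_i v_i$. But the minimum of viscosity subsolutions is \emph{not} a subsolution in general (already for $F=\Delta$: $v_1=x_n$ and $v_2=-x_n$ are harmonic, yet $\min(v_1,v_2)=-|x_n|$ is touched from above at $0$ by $-x_n^2$, whose Hessian is negative). Convexity of $F$ does not rescue this step. The paper establishes the subsolution property of the quasiconcave envelope by an entirely different route: it first proves that for each $\mu$ the $\alpha$-parabolic $p$-convolution $V_{p,\mu}$ is a subsolution (Proposition~\ref{prop:envelope-subsol}), via the transformation $w_p=v^p(x,t^{1/\alpha})$ and the structural conditions (C1)--(C2) of \cite{IshLiuSal20}, which is exactly where the convexity and $1$-homogeneity of $F$ are used; it then takes $\sup_\mu$ and finally lets $p\to-\infty$ (Lemma~\ref{lem:p-quasi-approx}, Corollary~\ref{cor:quasi-con-subsol}). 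A further technical point the paper must handle, and which your sketch elides, is that the supremum defining $V_{p,\mu}$ is attained at \emph{interior} points (Lemma~\ref{lem:envelope-int-max}); this is nontrivial precisely because $v=1$ on $\partial K$, and requires the growth condition \eqref{eq:initial-growth} on the initial data.

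There is a second, related issue in how you conclude $w^\star=w$. You invoke ``comparison'' to get $w^\star\le P[w^\star]=w$; with a genuinely Lipschitz $g_\e$ this is fine, but your $g_\e$ satisfies $g_\e(0)=\e^a>0$, which is incompatible with a nonnegative solution vanishing on an open set (so either $w_{\e,R,T}$ is not $\ge0$, or it is not $\le W$, and your support bound collapses). The paper sidesteps this by working with the discontinuous cutoff $h$ of \eqref{eq:assump-h} (so $h(0)=0$), for which direct comparison is unavailable; it instead proves $v^\star\le v$ by the Lavrentiev rescaling $v^\rho(x,t)=v(\rho x,\rho^2 t)$, which turns the equation into $F(D^2v^\rho)-\partial_tv^\rho=\rho^2 h(v^\rho)$ and produces the strict inequality needed for the strong minimum principle in Case~\textbf{B}. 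The most delicate case is the free-boundary touching scenario (Case~\textbf{C}), where the paper needs both the $C^{1,1}_x\cap C^{0,1}_t$ regularity of $v^\rho$ near $\partial\{v^\rho>0\}$ and a nondegeneracy estimate in an oblique cone (Appendix~\ref{sec:appen-nondeg}) to derive a contradiction; your proposal does not contain a mechanism for this comparison at free-boundary points.
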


It is worth noting that the solution $u$ in Theorem~\ref{thm:main-exterior} is also $\al$-parabolically quasiconcave for any $\al>1$. This follows from the time-monotonicity of $u$, and the monotonicity of $\al\longmapsto \left((1-\rho)t_0^\al+\rho t_1^\al\right)^{1/\al}$ for every $t_0,t_1>0$ and $\rho\in(0,1)$ as a consequence of Hölder's inequality.

We would like to emphasize that the quasiconcavity condition on the initial value $u_0$ in Theorem~\ref{thm:main-exterior} is not redundant. This is because the quasiconcavity of the solution implies the quasiconcavity of the initial value $u_0$ due to continuity.

In Theorem~\ref{thm:main-exterior}, we assume that $u_0$ is a subsolution of $F(D^2u_0)=g(u_0)$. It is worth noting that for general quasiconcave initial data $u_0$, even the solution of the homogeneous heat equation may not be spatially quasiconcave, as demonstrated in \cite{IshSal08}. Furthermore, \cite{ChaWei20} provides a counterexample illustrating that the additional subharmonicity assumption on $u_0$ is still insufficient to ensure the quasiconcavity of the solution.

We observe that the question of uniqueness in the free boundary problem addressed in this paper can be straightforwardly established through the application of the Lavrentiev principle when the free boundary is sufficiently smooth, for instance, $C^1$ in space and Lipschitz in time. As indicated in \cite[Remark~1]{JeoSha23}, an alternative method is necessary, even in the elliptic case, to rule out the possibility of a free boundary point with singularity. Furthermore, it can be shown  that the solution converges to its elliptic counterpart, provided the elliptic solution has a unique solution, which holds true within the class of smooth boundaries.

We would like to enumerate several open questions to which our main result, Theorem~\ref{thm:main-exterior}, can be extended:
\begin{enumerate}
\item Equations with a  right-hand side $f(u)$, and how general  can this $f(u)$ be, to allow existence of convex solutions.


\item Asymptotic convexity, i.e., the convexity of the solution after some (uniform) time, starting from a general initial value.

\item Problems with unbounded support of initial data.

\item The case when the compact set $K$ is a "flat piece," as considered in the elliptic problem \cite{JeoSha23}.


\end{enumerate}

\subsection{Notation}
We denote the points of $\R^{n+1}$ by $z=(x,t)=(x',x_n,t)$, where $x'=(x_1,\cdots,x_{n-1})\in\R^{n-1}$. For $z^0=(x^0,t^0)\in\R^{n+1}$ and $r>0$, we denote
\begin{align*}
    &B_r(x^0):=\{x\in\R^n\,:\, |x-x^0|<r\}\,:\, \text{ ball in }\R^n,\\
    &Q_r(z^0):=B_r(x^0)\times(t^0-r^2,t^0]\,:\,\text{ parabolic cylinder in }\R^{n+1}.
\end{align*}
 We denote the gradient of $u$ by $$
\D u=D u=(\p_{x_i}u,\cdots,\p_{x_n}u).
$$
We also use the notation $D^2u$ to indicate the Hessian of $u$, representing the $n\times n$ matrix with entries $\partial_{x_i x_j}u$.
For $-1<a<0$, we consistently fix the following constant throughout the paper
$$
\be:=\frac2{1-a}\in(1,2).
$$

\section{Regularized problem}
In our problem, the presence of a highly singular right-hand side in \eqref{eq:pde} adds complexity to the investigation of the existence of quasiconcave solutions. To overcome this challenge, we employ an approach where we approximate the r.h.s. $g$ with more regular functions.

For some small constant $\e_1>0$, we consider a function $h:\R\to\R$ defined by
\begin{align}\label{eq:assump-h}
    h(s):=\begin{cases}
    0,&-\infty<s\le0,\\
    \e_1^a,&0<s<\e_1,\\
    s^a,&s\ge \e_1.
    \end{cases}
\end{align}
Notice that $h(s)=h(s)\chi_{\{s>0\}}$. Moreover, let $v_0:\R^n\to\R$ be a continuous function satisfying the first two conditions in \eqref{eq:assump-initial} and $F(D^2v_0)\ge h(v_0)$ in $\{v_0>0\}\setminus K$.

We find a nonnegative compactly supported function $v_\infty:\R^n\to \R$ satisfying
\begin{align}
    \label{eq:v-infty}
    \begin{cases}
        F(D^2v_\infty)=h(v_\infty)&\text{in }\R^n\setminus K,\\
        v_\infty=1&\text{in }K,\\
        |\D v_\infty|=0&\text{on }\partial{\{v_\infty>0\}},
    \end{cases}
\end{align}
with $\{v_\infty>0\}\supset \{v_0>0\}$ in the following way: 
for each $k\in \mathbb{N}$, we let $v_\infty^k:\R^n\to\R$ be a compactly suppport nonnegative function satisfying 
\begin{align}
    \label{eq:v-infty-k}
    \begin{cases}
        F(D^2v_\infty^k)=h(v_\infty^k)&\text{in }\R^n\setminus K,\\
        v_\infty^k=1+\frac1k&\text{in }K,\\
        |\D v_\infty^k|=0&\text{on }\partial\{v^k_\infty>0\},
    \end{cases}
\end{align}
with $v_\infty^k>v_0$ in $\{v^k_\infty>0\}$.

The existence of $\{v_\infty^k\}_{k\in\mathbb{N}}$ can be obtained by the application of the Perron's method, see the proof of \cite[Lemma~1]{JeoSha23}.
Then, over a subsequence, $v_\infty^k$ is uniformly convergent in compact sets, say to $v_\infty$, and it satisfies \eqref{eq:v-infty}. See the proof of Theorem~1 for the fully nonlinear case in \cite{JeoSha23}. \eqref{eq:v-infty-k} will play an important role in the proof of the support condition of the solution in Lemma~\ref{lem:quasiconcave-const-approx}, specifically in Step 3.

We consider the following regularized problem:

\begin{proposition}\label{prop:quasiconcave-const}
Let $K$ and $\al$ be as in Theorem~\ref{thm:main-exterior} and $h$, $v_0$ and $v_\infty$ be as above. Let $R_0>0$ be such that $\supp v_\infty\subset B_{R_0}$. Then there exists a space-time quasiconcave nonnegative function $v$ in $\R^n\times(0,\infty)$ which is nondecreasing in time and satisfies
\begin{align}
    \label{eq:sol-const}
    \begin{cases}
    F(D^2v)-\partial_tv=h(v)&\text{in } (\R^n\setminus K)\times(0,\infty),\\
    v=v_0&\text{on }\{t=0\},\\
    |\D v|=v=0&\text{on }\partial \Omega_v\cap(\R^n\times(0,\infty)),\\
    v=1&\text{in } K\times[0,\infty),
\end{cases}\end{align} 
with $\Omega_v=\{v>0\}$ satisfying 
$\{v_0>0\}\times(0,\infty)\subset \Omega_v
\subset B_{R_0}\times(0,\infty)$. Moreover, there exist constants $C>0$ and $\gamma>0$, depending only on $a, \la,\Lambda$, $n$ and $v_0$, such that for any $Q_r(z_0)\Subset (\R^n\setminus K)\times(0,\infty)$ with $z_0\in \partial\Omega_v\cap \{t>0\}$
\begin{align}
    \label{eq:reg-FN-FB}
    \sup_{Q_r(z_0)}v\le Cr^{1+\gamma}.
\end{align}
\end{proposition}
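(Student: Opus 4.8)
The plan is to construct $v$ as the limit of solutions to a "penalized + obstacle" parabolic problem, exploiting the convexity machinery of the authors' earlier elliptic work \cite{JeoSha23} combined with a time-discretization (or direct parabolic) scheme. First I would set up the approximating problems: fix a large ball $B_{R_0}$ with $\supp v_\infty\subset B_{R_0}$, and for each parameter I would solve the parabolic obstacle-type problem
\begin{align*}
F(D^2 v)-\partial_t v = h(v)\chi_{\{v>0\}} \quad\text{in }(B_{R_0}\setminus K)\times(0,T],
\end{align*}
with $v=1$ on $K$, $v=0$ on $\partial B_{R_0}$, $v=v_0$ at $t=0$, using $v_\infty$ (and $v_\infty^k$) as natural barriers from above and $v_0$ as a subsolution barrier from below. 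Existence for fixed regularized data follows from standard parabolic viscosity-solution theory (Perron's method, as in the elliptic case), since $h$ is bounded and continuous away from $0$. The key monotonicity — that $v$ is nondecreasing in $t$ — should come from the comparison principle applied to $v(\cdot,\cdot+\tau)$ versus $v(\cdot,\cdot)$: since $v_0$ is a subsolution of the elliptic equation $F(D^2v_0)\ge h(v_0)$, it is a subsolution of the parabolic equation (with $\partial_t v_0=0$), hence $v(\cdot,\tau)\ge v_0 = v(\cdot,0)$, and the comparison principle propagates this to $v(\cdot,t+\tau)\ge v(\cdot,t)$ for all $t$. Monotonicity in time then forces $v(\cdot,t)\le v_\infty$ for all $t$ (since $v_\infty$ is the stationary supersolution that dominates $v_0$), which simultaneously gives the support containment $\Omega_v\subset B_{R_0}\times(0,\infty)$; the reverse inclusion $\{v_0>0\}\times(0,\infty)\subset\Omega_v$ follows from $v(\cdot,t)\ge v_0$.

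The heart of the matter is the space-time quasiconcavity. Here I would follow the \emph{quasiconcave envelope} approach adapted to the parabolic setting: given a candidate solution $v$, form its space-time quasiconcave envelope $v^*$ (the smallest space-time quasiconcave function above $v$, equivalently the function whose superlevel sets are the convex hulls of those of $v$), and show that $v^*$ is still a (sub)solution of the same free boundary problem, so that by minimality $v=v^*$. The crucial ingredient is a \emph{concavity maximum principle} for the parabolic operator $F(D^2\cdot)-\partial_t\cdot$: one needs that if $v$ satisfies $F(D^2v)-\partial_t v \le h(v)$ in the viscosity sense, then its envelope $v^*$ satisfies the same inequality, using the convexity of $F$, its $1$-homogeneity, uniform ellipticity, and the monotonicity of $h$ on $(0,\infty)$ together with the fact that $h$ is constant near $0$ (which handles the troublesome low-value regime). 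The convexity of $F$ is essential: for the concave-envelope argument one estimates $F$ at a convex combination of Hessians by the convex combination of the values, and the parabolic term behaves correctly under the affine-in-$t$ combination defining space-time convex combinations. This is where I expect the main obstacle to lie — one must carefully verify that taking the space-time convex hull of superlevel sets does not destroy the subsolution property at the "new" boundary points created by the hull operation, and that the free boundary (gradient) condition $|\D v|=0$ is compatible with the envelope; the standard trick is to work with the regularized $h$ (so the equation is uniformly parabolic with bounded right-hand side), prove quasiconcavity there, and note it passes to limits since a locally uniform limit of space-time quasiconcave functions is space-time quasiconcave.

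Finally, for the growth estimate \eqref{eq:reg-FN-FB} near free boundary points $z_0\in\partial\Omega_v\cap\{t>0\}$: since on $Q_r(z_0)$ one has $v\ge 0$, $v=0$ at $z_0$, and $F(D^2v)-\partial_t v = h(v)$ with $0\le h(v)\le \e_1^a$ bounded wherever $v<\e_1$, interior parabolic regularity (Krylov–Safonov / $C^{1,\gamma}_{\mathrm{par}}$ estimates for uniformly parabolic fully nonlinear equations with bounded right-hand side, as in \cite{AraSaUrb23}) gives $v\in C^{1+\gamma}_{\mathrm{par}}$ locally with a bound depending only on $a,\la,\Lambda,n$ and the $L^\infty$ bound (hence on $v_0$ via $\|v\|_\infty\le\|v_\infty\|_\infty\le C(v_0)$); combined with $v(z_0)=0=|\D v(z_0)|$ this yields $\sup_{Q_r(z_0)} v\le C r^{1+\gamma}$. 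I would note that the gradient vanishing at $z_0$ is itself a consequence of the free boundary condition together with the nonnegativity of $v$ and the $C^1$ regularity. The only subtlety is that the estimate must be uniform in the regularization parameter $\e_1$, which holds because $\e_1^a\le$ (a fixed constant) only if $\e_1$ is bounded below — so one should instead observe that near a free boundary point $v$ is small, so $h(v)=\e_1^a$ exactly, and rescaling $v_r(z):=v(z_0+rz)/r^{1+\gamma}$ reduces matters to a fixed normalized estimate; alternatively, and more robustly, one derives \eqref{eq:reg-FN-FB} directly for the \emph{singular} limit equation where $h(v)=v^a$ and the natural scaling exponent is $\be=\frac{2}{1-a}$, giving $\gamma=\be-1\in(0,1)$.
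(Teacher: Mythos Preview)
Your construction outline (barriers $v_0\le v\le v_\infty$, penalization, Perron, time-monotonicity from $F(D^2v_0)\ge h(v_0)$, support inclusion) is close to what the paper does in Lemma~\ref{lem:quasiconcave-const-approx}, and your growth argument via parabolic $C^{1,\gamma}$ regularity for the bounded right-hand side $h(v)\le\e_1^a$ near the free boundary is also correct in spirit (note that $\e_1$ is \emph{fixed} in Proposition~\ref{prop:quasiconcave-const}, so no uniformity in $\e_1$ is needed here; that issue arises only later in Theorem~\ref{thm:main-exterior}).

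The genuine gap is the quasiconcavity step. You write that once the envelope $v^*$ is shown to be a subsolution, ``by minimality $v=v^*$''. But $v$ carries no minimality property that would force $v\ge v^*$: $v$ is obtained as an infimum of \emph{super}solutions, while $v^*$ is only a \emph{sub}solution, so no direct comparison applies. Moreover the positivity set $\{v^*>0\}$ is a priori larger than $\{v>0\}$, and you have no information about $|\nabla v^*|$ on $\partial\{v^*>0\}$, so the free boundary condition cannot be invoked either. The paper closes this gap by a Lavrentiev rescaling: set $v^\rho(x,t):=v(\rho x,\rho^2 t)$, which by $1$-homogeneity of $F$ solves $F(D^2v^\rho)-\partial_tv^\rho=\rho^2 h(v^\rho)$; for small $\rho$ one has $v^\rho\ge v^*$ (since $\rho^{-1}K$ swallows $\supp v^*$), and if $\rho_0:=\sup\{\rho:v^\rho\ge v^*\}<1$ there must be a touching point $z^0$. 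Three cases are then analyzed. The interior case $v^*(z^0)=v^{\rho_0}(z^0)>0$ uses the \emph{strict} inequality $h(v^*)>\rho_0^2\,h(v^{\rho_0})$ at the touching point to feed the strong minimum principle; the initial-time case $t^0=0$ uses the assumed quasiconcavity of $v_0$ together with Lemma~\ref{lem:sol-supp}; and the hardest case, a touching point on the free boundary $\partial\{v^*>0\}\cap\{t>0\}$, requires a quantitative nondegeneracy estimate (Lemma~\ref{lem:par-pucci-sol-nondeg-2} in the Appendix): one builds an oblique cone inside $\{v^*>0\}$ and shows $v^{\rho_0}-v^*\ge c s^{3/2}$ on a spatial ball of radius $s$, contradicting the $C^{1,1}_x$ bound $v^{\rho_0}\le Cs^2$ coming from the obstacle-problem regularity near the free boundary. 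None of this mechanism is present in your sketch, and without it the inequality $v\ge v^*$ remains unproved.

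A secondary point: even the subsolution property of $v^*$ is not a one-line ``concavity maximum principle''. The paper proves it (Corollary~\ref{cor:quasi-con-subsol}) by first establishing it for the $\al$-parabolic $p$-concave envelopes $V_p$ for $p\ll 0$ (Proposition~\ref{prop:envelope-subsol}), which in turn needs a delicate interior-attainment lemma (Lemma~\ref{lem:envelope-int-max}) to rule out maximizing configurations that hit the fixed boundary $\partial K$ or $\{t=0\}$; then one lets $p\to-\infty$. Your sketch acknowledges difficulty at ``new boundary points created by the hull'' but does not indicate how to handle it.
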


We remark that \eqref{eq:reg-FN-FB} is much stronger statement that $|\nabla v| = 0$ on $\partial \Omega_v\cap(\R^n\times(0,\infty))$.

In Section~\ref{sec:quasi-con}, we will prove Proposition~\ref{prop:quasiconcave-const} and utilize it to establish Theorem~\ref{thm:main-exterior}.


\section{Subsolution property of envelopes}

For a function $v$ as in Proposition~\ref{prop:quasiconcave-const},
we say that a function $v^*$ is the $\alpha$-parabolically quasiconcave envelope 
of $v$ if $v^*$ is the smallest $\alpha$-parabolically quasiconcave function that lies above $v$. The main objective of this section is to establish the subsolution property of $v^*$ (see Corollary~\ref{cor:quasi-con-subsol}), which will play a pivotal role in the proof of Proposition~\ref{prop:quasiconcave-const} in 
Section~\ref{sec:quasi-con}.

To achieve this goal, we introduce some notation. Given $\alpha\in(0,\infty)$ and $T>0$, we define $\mathcal{D}_v$ to be the $\alpha$-parabolically convex hull of the set $\{v>0\}$. In other words, $\mathcal{D}_v$ is the smallest $\alpha$-parabolically convex set that contains $\{v>0\}$. We also define $A_v:=\mathcal{D}_v\setminus(K\times[0,T])$.

For each
$$
\mu\in \bM:=\left\{\mu=(\mu_1,\mu_2,\ldots,\mu_{n+2})\in (0,1)^{n+2}\,:\, \sum_{i=1}^{n+2}\mu_i=1\right\},
$$
we set
\begin{multline*}
    A_v^{\mu}:=\Bigg\{(x,t)\in A_v\,:\, x=\sum_{i=1}^{n+2}\mu_ix_i\text{ and }t=\left(\sum_{i=1}^{n+2}\mu_it_i^\al\right)^{1/\al}\\
    \text{for some }(x_i,t_i)\in\{v>0\}\setminus (K\times[0,T]),\, 1\le i\le n+2  \Bigg\}.
\end{multline*}
For $\e\in(0,1/2)$ small, we also define
\begin{align*}
    &A_{v,\e}:=\{(x,t)\in A_v\,:\, \dist((x,t),\partial A_v\cap\{0<t<T\})>\e\},\\
    &A_{v,\e}^\mu:=\{(x,t)\in A_v^\mu\,:\, \dist((x,t),\partial A_v^\mu\cap\{0<t<T\})>\e\}.
\end{align*}

Next, for $b=(b_1,\cdots,b_{n+2})\in(0,\infty)^{n+2}$, $\mu\in\bM$ and $p\in[-\infty,\infty]$, we denote the $\mu$-weighted $p$-mean of $b$ by 
\begin{align*}
    M_p(b;\mu):=\begin{cases}
        \left[\sum_{i=1}^{n+2}\mu_ib_i^p\right]^{1/p}&\text{if }p\neq-\infty,0,\infty,\\
        \max\{b_1,\cdots,b_{n+2}\}&\text{if }p=\infty,\\
        b_1^{\mu_1}\cdots b_{n+2}^{\mu_{n+2}}&\text{if }p=0,\\
        \min\{b_1,\cdots,b_{n+2}\}&\text{if }p=-\infty.
    \end{cases}
\end{align*}
We say that a function $v$ is $\al$-parabolically $p$-concave if 
$$
v\left(\sum_{i=1}^{n+2}\mu_ix_i,\,M_\al(t_1,\cdots,t_{n+2};\mu)\right)\ge M_p\left(v(x_1,t_1),\cdots,v(x_{n+2},t_{n+2});\mu\right)
$$
for every $\mu\in\bM$ and $\{(x_i,t_i)\}_{i=1}^{n+2}$. In addition, we define the $\al$-parabolic $p$-convolution of $v$ for $\mu\in \bM$:
for $(x,t)\in A_v^\mu$
\begin{multline}\label{eq:envelope}
    V_{p,\mu}(x,t):=\sup\Bigg\{\left(\sum_{i=1}^{n+2}\mu_i v(x_i,t_i)^p\right)^{1/p}\,:\, (x_i,t_i)\in \{v>0\}\setminus (K\times[0,T]),\\
    x=\sum_{i=1}^{n+2}\mu_ix_i,\,t=\left(\sum_{i=1}^{n+2}\mu_it_i^\al\right)^{1/\al}\Bigg\}.
\end{multline}
We also set 
$$
V_p(x,t):=\sup_{\mu\in \bM}V_{p,\mu}(x,t).
$$
Notice that $V_p$ is the $\al$-parabolically $p$-concave envelope of $v$, i.e., the smallest $\al$-parabolically $p$-concave function greater than or equal to $v$.
Similarly, we define 
\begin{multline*}
    V^*_{\mu}(x,t):=\sup\Bigg\{\min\{v(x_i,t_i)\,:\, 1\le i\le n+2\} \,:\, (x_i,t_i)\in \{v>0\}\setminus (K\times[0,T]),\\
    x=\sum_{i=1}^{n+2}\mu_ix_i,\,t=\left(\sum_{i=1}^{n+2}\mu_it_i^\al\right)^{1/\al}\Bigg\}
\end{multline*}
and let 
$$
V^*(x,t):=\sup_{\mu\in \bM}V_\mu^*(x,t).
$$
Clearly, $V^*$ is the $\al$-parabolically quasiconcave envelope of $v$.

For $p\in(-\infty,0)$, let $w_p:\R^n\times[0,T^\al]\to\R$ be a function defined by
$$
w_p(x,\tau):=v^{p}(x,\tau^{1/\al}),
$$
which corresponds to (1.21) in \cite{IshLiuSal20}. Recall \eqref{eq:envelope}, and define
\begin{multline*}
    W_{p,\mu}(x,\tau):=\inf\Bigg\{\sum_{i=1}^{n+2}\mu_iw_p(x_i,\tau_i)\,:\, (x_i,\tau_i)\in\{w>0\}\setminus (K\times[0,T^\al]),\\ x=\sum_{i=1}^{n+2}\mu_ix_i,\, \tau=\sum_{i=1}^{n+2}\mu_i\tau_i\Bigg\}.
\end{multline*}
It is easy to see that
$$
W_{p,\mu}(x,\tau)=V_{p,\mu}(x,\tau^{1/\al}).
$$

In the following lemma, we prove that the supremum in \eqref{eq:envelope} is achieved at interior points $(x_i,t_i)\in(\{v>0\}\cap\{0<t\le T\})\setminus (K\times[0,T])$, $1\le i\le n+2$. This corresponds to the situation with $p<0$ in \cite[Lemma~4.1]{IshLiuSal20}, which is trivial under their assumption of zero boundary data.  However, in our case, this becomes nontrivial and requires additional technical difficulty.

\begin{lemma}
    \label{lem:envelope-int-max}
    Let $v\in C(\R^n\times(0,\infty))$ be a nonnegative function which is nondecreasing in time and satisfies \eqref{eq:sol-const}. Let $\al\in[0,\infty)$ and suppose that for every $x\in\{v_0>0\}\setminus K$
    \begin{align}
        \label{eq:initial-growth}
        \lim_{t\to0+}\frac{v^p(x,t)-v^p(x,0)}{t^\al}=-\infty.
    \end{align}
Given any $\e\in(0,1/2)$, there exists $\bar p\in(-\infty,0)$ such that for any $p\in(-\infty,\bar p)$, $\mu\in \bM$ and $(\hat x,\hat t)\in A^\mu_{v,\e}\cap\{0<t<T\}$, there exist $(x_i,t_i)\in (\{v>0\}\cap\{0<t\le T\})\setminus(K\times[0,T])$, $1\le i\le n+2$, such that
\begin{align}
    \label{eq:envelope-comb}
    \hat x=\sum_{i=1}^{n+2}\mu_ix_i,\quad \hat t=\left(\sum_{i=1}^{n+2}\mu_it_i^\al\right)^{1/\al}\quad\text{and}\quad V_{p,\mu}(\hat x,\hat t)=\left(\sum_{i=1}^{n+2}\mu_iv(x_i,t_i)^p\right)^{1/p}.
\end{align}
\end{lemma}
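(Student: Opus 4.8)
The plan is to show that the supremum defining $V_{p,\mu}(\hat x,\hat t)$ cannot be approached along sequences of competitor tuples that escape to the boundary $\partial A_v \cap \{0 < t < T\}$ or to the initial slice $\{t=0\}$ or to $K\times[0,T]$; once the competitor set is shown to be effectively compact, the supremum is attained. First I would set up the extremal problem: fix $\e$, $\mu$, and $(\hat x,\hat t)\in A^\mu_{v,\e}\cap\{0<t<T\}$, and take a maximizing sequence of tuples $\{(x_i^{(m)},t_i^{(m)})\}_{i=1}^{n+2}$ in $(\{v>0\}\setminus(K\times[0,T]))^{n+2}$ satisfying the two constraints $\hat x = \sum_i \mu_i x_i^{(m)}$ and $\hat t^\al = \sum_i \mu_i (t_i^{(m)})^\al$, with $\left(\sum_i \mu_i v(x_i^{(m)},t_i^{(m)})^p\right)^{1/p}$ converging to $V_{p,\mu}(\hat x,\hat t)$. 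Since all $x_i^{(m)}$ lie in the bounded set $\overline{B_{R_0}}$ (the support condition forces $\{v>0\}\subset B_{R_0}\times(0,\infty)$) and all $t_i^{(m)}\in[0,T]$ by the constraint, we may pass to a subsequence so that $(x_i^{(m)},t_i^{(m)})\to(x_i,t_i)$ for each $i$. The constraints pass to the limit, so it remains to rule out that any limit point $(x_i,t_i)$ lies in the "bad" set $(\{v=0\}) \cup (K\times[0,T]) \cup (\R^n\times\{0\})$, and, crucially, that such escape does not inflate the supremum.

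The key observation is that because $p<0$, the function $b\mapsto b^p$ blows up to $+\infty$ as $b\to 0^+$, while the $p$-mean $M_p$ is dominated by the minimum, $M_p(b;\mu)\le \min_i b_i$ times a harmless factor; more precisely $\left(\sum_i\mu_i b_i^p\right)^{1/p}\le \min_i b_i$. Hence if along the maximizing sequence one coordinate $v(x_i^{(m)},t_i^{(m)})$ stays bounded away from $0$ — which it must, otherwise $V_{p,\mu}(\hat x,\hat t)$ would be forced to $0$ and the conclusion is moot (or else $(\hat x,\hat t)\notin A_v^\mu$ in a nondegenerate way) — then all coordinates must stay uniformly bounded away from $0$ in order for the product not to be overwhelmed. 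Indeed, writing $c_0 := \liminf_m \min_i v(x_i^{(m)},t_i^{(m)})$, if $c_0 = 0$ then $\left(\sum_i\mu_i v(x_i^{(m)},t_i^{(m)})^p\right)^{1/p}\le \left(\mu_{i_0}v(x_{i_0}^{(m)},t_{i_0}^{(m)})^p\right)^{1/p}=\mu_{i_0}^{1/p}v(x_{i_0}^{(m)},t_{i_0}^{(m)})\to 0$ along a subsequence, contradicting that the sequence is maximizing toward a positive value. So each limit $(x_i,t_i)$ satisfies $v(x_i,t_i)\ge c_0>0$, which immediately gives $(x_i,t_i)\in\{v>0\}$ and, since $v\equiv 1$ on $K$ but $v(x_i,t_i)<1$ is impossible to exclude directly, I would instead use the structure of $A_v^\mu$: the competitor points are required to lie in $\{v>0\}\setminus(K\times[0,T])$, and this condition is closed away from $\partial K$; the $\e$-separation of $(\hat x,\hat t)$ from $\partial A_v$ together with the representation $\hat x=\sum\mu_ix_i$ should be exploited (via a convexity/extreme-point argument on the $\al$-parabolically convex hull) to keep the $(x_i,t_i)$ uniformly away from $K\times[0,T]$.

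The remaining and genuinely delicate case is escape to the initial slice, $t_i^{(m)}\to 0$ for some $i$ with the limit point $x_i\in\{v_0>0\}\setminus K$ — this is exactly where hypothesis \eqref{eq:initial-growth} enters, and it is the main obstacle. Here the point is that if $t_{i_0}^{(m)}\to 0$, then by the time-monotonicity and continuity of $v$ we have $v(x_{i_0}^{(m)},t_{i_0}^{(m)})\to v_0(x_{i_0})>0$, so this coordinate does not degenerate; the issue is rather that we want the attained configuration to have all $t_i\in(0,T]$, i.e., to be \emph{interior} in time, so that the envelope can later be differentiated. The argument is a first-variation/perturbation one: suppose the maximizer has $t_{i_0}=0$; by the constraint $\hat t^\al=\sum_i\mu_i t_i^\al$ with $\hat t>0$ there is some other index $j$ with $t_j>0$, and one can perturb by increasing $t_{i_0}$ slightly (by $\delta^{1/\al}$ worth of $\al$-power) while decreasing $t_j$ correspondingly, keeping $\hat x$ fixed by not moving the $x_i$'s. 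The change in the objective is governed, to leading order, by $\mu_{i_0}\,\partial_\tau\big(v^p(x_{i_0},\tau^{1/\al})\big)\big|_{\tau=0^+}$ against $\mu_j\,\partial_\tau\big(v^p(x_j,\tau^{1/\al})\big)\big|_{\tau=t_j^\al}$; assumption \eqref{eq:initial-growth} says precisely that the former derivative is $-\infty$, meaning that $v^p$ as a function of $\tau=t^\al$ has an infinite \emph{downward} slope at $\tau=0$ — equivalently $w_p(x_{i_0},\cdot)$ increases infinitely fast from $\tau=0$ (recall $w_p = v^p$ is being minimized, and $v^p$ decreases as $v$ increases). Thus moving mass of the $\al$-time variable \emph{onto} the coordinate $i_0$ strictly decreases $\sum_i\mu_i w_p(x_i,\tau_i)$, i.e. strictly increases $\left(\sum_i\mu_i v^p\right)^{1/p}$ (again using $p<0$), contradicting maximality. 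This forces $t_{i_0}>0$. Quantifying "there exists $\bar p$" amounts to choosing $p$ negative enough that the blow-up in \eqref{eq:initial-growth} dominates uniformly over the compact set of relevant $x$'s and over $\mu\in\bM$ and over $(\hat x,\hat t)\in A^\mu_{v,\e}$; the $\e$-separation gives a uniform lower bound on the available "room" $t_j>0$ to absorb the perturbation, which is what makes the threshold $\bar p$ depend only on $\e$ (and the fixed data), as claimed. I would carry out the escape-to-$K$ exclusion and the escape-to-$\{t=0\}$ exclusion in that order, treating the latter via the perturbation argument above as the crux of the proof.
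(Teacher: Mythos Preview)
Your treatment of the escape to $\{v=0\}$ and the escape to $\{t=0\}$ is essentially correct and matches the paper's Step~1: the $p<0$ structure forces all values $v(x_i,t_i)$ to stay bounded away from zero, and the perturbation argument using \eqref{eq:initial-growth} (shift $\al$-time mass from an interior coordinate $t_j>0$ onto the coordinate with $t_{i_0}=0$) rules out limit points on the initial slice. Note, however, that this perturbation argument works for \emph{every} $p<0$, not just for $p$ sufficiently negative; no uniformity is needed here, so this is \emph{not} where $\bar p$ comes from.

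The genuine gap is the escape to $\partial K\times[0,T]$, which you dismiss with a vague ``$\e$-separation\,/\,convexity\,/\,extreme-point'' argument. This does not work: the $\e$-separation is from the \emph{outer} boundary $\partial A_v$, and gives no control on how close the competitor points $(x_i,t_i)$ can be to $K$. Worse, since $v=1$ on $K$ and the $p$-mean is monotone increasing in each argument, the maximizer \emph{wants} to place points on $\partial K$ (where $v$ attains its maximum value $1$), so there is no elementary mechanism excluding this. In the paper this is the hard step (Step~2) and the source of $\bar p$: one argues by contradiction, taking sequences $p_j\to-\infty$ along which a maximizing point $(x_{j,1},t_{j,1})$ lies on $\partial K\times(0,T]$. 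Passing to limits and using the monotonicity $M_{p_j}\to M_{-\infty}=\min$, one reduces to a statement about the quasiconcave envelope $V^*_\mu$, namely that $V^*_\mu(\hat x,\hat t)=\min_i v(x_i,t_i)$ with $x_1\in\partial K$. This is then ruled out by the argument of \cite{CuoSal06}, which crucially uses the strict subsolution property $F(D^2v)-\partial_tv=h(v)>0$ (so $v$ has no interior local maximum, hence $0<v<1$ in $\{v>0\}\setminus K$). You have the order of difficulty reversed: the initial-slice case is routine once \eqref{eq:initial-growth} is granted, whereas the $\partial K$ case requires this limiting-plus-PDE argument and is what forces the threshold $\bar p$.
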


\begin{proof}
We divide the proof into two steps.

\medskip\noindent\emph{Step 1.} By continuity of $v$, we can find $(x_i,t_i)\in\overline{\{v>0\}\setminus (K\times[0,T])}$, $1\le i\le n+2$, such that \eqref{eq:envelope-comb} holds. We first claim that no $(x_i,t_i)$ can be contained on $\{v=0\}\cup(\{0<v_0<1\}\times\{0\})$.

To prove the claim, we assume to the contrary that $(x_i,t_i)\in\{v=0\}\cup(\{0<v_0<1\}\times\{0\})$ for some $i$, say $i=1$. If $v(x_1,t_1)=0$, then we readily have $V_{p,\mu}(\hat x,\hat t)=0$ due to the negativity of $p$. This is a contradiction since $V_{p,\mu}>0$ in $A_v^\mu$. Therefore, we may assume $0<v_0(x_1)<1$ and $t_1=0$. If $t_i=0$ for all $i$, then $\hat t=0$, which contradicts $\hat t>0$. Thus, we may assume without loss of generality $(x_2,t_2)\in(\{v>0\}\cap\{t>0\})\setminus (K\times[0,T])$. Writing $\hat\tau=\hat t^\al$ and $\tau_i=t_i^\al$, \eqref{eq:envelope-comb} reads
$$
\hat x=\sum_{i=1}^{n+2}\mu_ix_i,\quad \hat\tau=\sum_{i=1}^{n+2}\mu_i\tau_i\quad\text{and}\quad V_\mu^{p}(\hat x,\hat t)=W_{p,\mu}(\hat x,\hat\tau)=\sum_{i=1}^{n+2}\mu_i w_p(x_i,\tau_i).
$$
For small $\rho\in(0,1)$, we consider $\{(\tilde x_i, \tilde\tau_i)\}_{i=1}^{n+2}$ defined by
\begin{align*}
    &\tilde x_i=x_i\,\,\,\text{ for } 1\le i\le n+2,\\
    &\tilde\tau_1=\frac{\rho}{\mu_1},\,\,\, \tilde\tau_2=\tau_2-\frac{\rho}{\mu_2},\,\,\,\text{ and } \tilde\tau_i=\tau_i\,\,\,\text{ for } 3\le i\le n+2.
\end{align*}
Note that $\sum_{i=1}^{n+2}\mu_i\tilde\tau_i=\sum_{i=1}^{n+2}\mu_i\tau_i=\hat\tau$ and that both $(\tilde x_1,\tilde\tau_1)$ and $(\tilde x_2,\tilde\tau_2)$ are contained in $(\{w_p<\infty\}\cap\{t>0\})\setminus (K\times[0,T^\al])$ for $\rho>0$ small. The Lipschitz continuity of $w_p$ near $(\tilde x_2,\tilde\tau_2)$ gives that for some constant $C_0>0$, independent of $\rho$,
\begin{align}
    \label{eq:v-diff-est-1}
    \mu_2(w_p(\tilde x_2,\tilde\tau_2)-w_p(x_2,\tau_2))\le C_0\rho.
\end{align}
On the other hand, we observe that \eqref{eq:initial-growth} is equivalent to
$$
\lim_{t\to0+}\frac{w_p(x,t)-w_p(x,0)}t=-\infty,\quad x\in\{v_0>0\}\setminus K.
$$
This gives that 
\begin{align*}
    w_p(\tilde x_1,\tilde\tau_1)-w_p(x_1,\tau_1)=w_p\left(x_1,\frac\rho{\mu_1}\right)-w_p(x_1,0)\le -2C_0\frac{\rho}{\mu_1}
\end{align*}
if $\rho>0$ is small enough. Combining this with \eqref{eq:v-diff-est-1} yields
\begin{align*}
    \sum_{i=1}^{n+2}\mu_iw_p(\tilde x_i,\tilde\tau_i)&=\mu_1(w_p(\tilde x_1,\tilde\tau_1)-w_p(x_1,\tau_1))+\mu_2(w_p(\tilde x_2,\tilde\tau_2)-w_p(x_2,\tau_2))
    +\sum_{i=1}^{n+2}\mu_iw_p(x_i,\tau_i)\\
    &\le -C_0\rho+W_{p,\mu}(\hat x,\hat\tau).
\end{align*}
This is a contradiction by the definition of $W_{p,\mu}$.

\medskip\noindent\emph{Step 2.} In this step, we prove the lemma by making use of the result obtained in Step 1 and the idea in \cite[Lemma~5.1]{CuoSal06}. We assume to the contrary that the lemma is not true. Then we can find sequences $p_j\in(-\infty,0)$, $(\hat x_j,\hat t_j)\in A_{v,\e}^\mu\cap\{0<t\le T\}$ and $\{(x_{j,i},t_{j,i})\}_{i=1}^{n+2}\subset \{v>0\}\setminus (K\times[0,T])$ such that $p_j\to-\infty$, $(x_{j,1},t_{j,1})\in\partial K\times(0,T]$, $\hat x_j=\sum_{i=1}^{n+2}\mu_ix_{j,i}$, $\hat t_j=\left(\sum_{i=1}^{n+2}\mu_it_{j,i}^\al\right)^{1/\al}$ and $V_{p_j,\mu}(\hat x_j,\hat t_j)=\left(\sum_{i=1}^{n+2}v(x_{j,i},t_{j,i})^{p_j}\right)^{1/{p_j}}$. Over a subsequence, we have that $(\hat x_j,\hat t_j)\to(\hat x,\hat t)\in\overline{A^\mu_{v,\e}}$, $(x_{j,1},t_{j,1})\to(x_1,t_1)\in\partial K\times[0,T]$ and $(x_{j,i},t_{j,i})\to(x_i,t_i)$ for $2\le i\le n+2$. Clearly, we have $\hat x=\sum_{i=1}^{n+2}\mu_ix_i$ and $\hat t=\left(\sum_{i=1}^{n+2}\mu_it_i^\al\right)^{1/\al}$. Moreover, for each $q<0$, we have for large $j$ so that $p_j<q$
$$
V_{p_j,\mu}(\hat x_j,\hat t_j)=\left(\sum_{i=1}^{n+2}\mu_iv(x_{j,i},t_{j,i})^{p_j}\right)^{1/{p_j}}\le \left(\sum_{i=1}^{n+2}\mu_iv(x_{j,i},t_{j,i})^q\right)^{1/q}
$$
by the Jensen's inequality.
Taking $j\to\infty$ gives
$$
V_\mu^*(\hat x,\hat t)\le \left(\sum_{i=1}^{n+2}\mu_iv(x_i,t_i)^q\right)^{1/q}.
$$
We then let $q\to-\infty$ to get
$$
V_\mu^*(\hat x,\hat t)\le \min\{v(x_i,t_i)\,:\, 1\le i\le n+2\}.
$$
By the definition of $V_\mu^*$, we find
\begin{align}
    \label{eq:quasiconcave-mu}
    V_\mu^*(\hat x,\hat t)=\min\{v(x_i,t_i)\,:\, 1\le i\le n+2\}.
\end{align}
We consider the two cases either $\hat t=0$ or $\hat t>0$.

\medskip\noindent\emph{Case 1.} If $\hat t=0$, then $t_i=0$ for all $1\le i\le n+2$. Thus
$$
V_\mu^*(\hat x,0)=\min\{v_0(x_i)\,:\, 1\le i\le n+2\},
$$
with $\hat x\in \{v_0>0\}\setminus K$ and $x_1\in \partial K$. Since $V_\mu^*(\cdot,0)$ is the $\mu$-quasiconcave envelope of $v_0$, we can apply \cite[Lemma~4.2 and Remark~4.2]{CuoSal06} (see also \cite{ColSal03}) to get a contradiction. In fact, the conditions in \cite{CuoSal06} require $|\D v_0|>0$ within the domain to guarantee $0<v_0<1$ in the considered region. In our case, despite lacking $|\D v_0|>0$, we have $0<v_0<1$ in our domain $\{v_0>0\}\setminus K$. This is because $v_0$ cannot have a local maximum, as $F(D^2v_0)\ge h(v_0)>0$ in $\{v_0>0\}\setminus K$.

\medskip\noindent\emph{Case 2.} Suppose $\hat t>0$. Then $(\hat x,\hat t)$ is contained in $\overline{A^\mu_{v,\e}}\cap\{t>0\}$, i.e., it is away from $\partial_pA_v^\mu$. Using that $F(D^2v)-\partial_tv=h(v)>0$ in $\{v>0\}\setminus (K\times[0,T])$, we can follow the argument in \cite[Lemma~4.2 and Remark~4.2]{CuoSal06} to contradict \eqref{eq:quasiconcave-mu}.
\end{proof}

Next, we prove the subsolution property for the envelopes $V_p$ for small $p\in(-\infty,0)$ by following the approach in \cite{IshLiuSal20}. A similar outcome is demonstrated in \cite[Theorem~1.1]{IshLiuSal20}. However, direct application of this result to our case is not available, as \cite{IshLiuSal20} assumes a zero-Dirichlet boundary condition, whereas we are dealing with more general boundary data.

\begin{proposition}
    \label{prop:envelope-subsol}
    Let $v$ be as in Lemma~\ref{lem:envelope-int-max}. Then, for any $\al\in[1,\infty)$ and $\e\in(0,1/2)$, there exists $\bar p\in(-\infty,0)$ such that for any $p\in(-\infty,\bar p)$, $V_p$ satisfies
    $$
    F(D^2V_p)-\partial_tV_p\ge h(V_p)\quad\text{in }A_{v,\e}\cap\{0<t<T\}.
    $$
\end{proposition}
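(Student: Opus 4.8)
The plan is to reduce the statement to the concavity machinery developed for $p$-convolutions and then to deal carefully with the two features that distinguish our setting from \cite{IshLiuSal20}: the presence of the obstacle $K\times[0,T]$ with nonzero boundary data, and the nonzero (merely quasiconcave, subsolution) initial datum. The key point, already prepared by Lemma~\ref{lem:envelope-int-max}, is that for $p$ sufficiently negative the supremum defining $V_{p,\mu}(\hat x,\hat t)$ at an interior point $(\hat x,\hat t)\in A^\mu_{v,\e}\cap\{0<t<T\}$ is attained at genuinely interior points $(x_i,t_i)\in(\{v>0\}\cap\{0<t\le T\})\setminus(K\times[0,T])$, away from the lateral and bottom parts of the parabolic boundary. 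Once this is known, $V_{p,\mu}$ is, near $(\hat x,\hat t)$, an infimal convolution in the sense of \cite{IshLiuSal20} of the function $w_p(x,\tau)=v^p(x,\tau^{1/\al})$, whose building block $v$ is a (super)solution of a uniformly elliptic, convex, $1$-homogeneous parabolic equation on a neighborhood of each $(x_i,t_i)$.

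First I would record the pointwise/viscosity computation at an interior contact configuration. Fix $(\hat x,\hat t)$ and an optimal tuple $\{(x_i,t_i)\}$ from Lemma~\ref{lem:envelope-int-max}, and suppose $\phi$ touches $V_{p,\mu}$ from above at $(\hat x,\hat t)$. Translating this through the relations $\hat x=\sum\mu_i x_i$, $\hat t^\al=\sum\mu_i t_i^\al$, one obtains test functions for $v$ at each $(x_i,t_i)$ whose spatial Hessians, gradients and time-derivatives are linked by the standard inf-convolution identities; here uniform ellipticity plus convexity plus $1$-homogeneity of $F$ are exactly what is needed to combine the $n+2$ elliptic inequalities $F(D^2v(x_i,t_i))-\partial_t v(x_i,t_i)\ge h(v(x_i,t_i))$ into a single inequality for $\phi$ at $(\hat x,\hat t)$ — this is the $\al$-parabolic analogue of \cite[Theorem~1.1]{IshLiuSal20}, and the factor $\al\ge 1$ enters through the concavity of $\tau\mapsto \tau^{1/\al}$ in the time-rescaling, which is why the hypothesis $\al\in[1,\infty)$ appears. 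The upshot is $F(D^2V_{p,\mu})-\partial_t V_{p,\mu}\ge \widetilde h_p(V_{p,\mu})$ in the viscosity sense on $A^\mu_{v,\e}\cap\{0<t<T\}$, where $\widetilde h_p$ is the appropriate $p$-transform of $h$; choosing $p$ negative enough (using that $h$ is bounded below by $\e_1^a>0$, non-increasing, and the exponent $\beta=2/(1-a)\in(1,2)$) makes $\widetilde h_p(s)\ge h(s)$, so $V_{p,\mu}$ is a genuine subsolution of the original equation. I would lean on \cite{IshLiuSal20} for the algebraic heart of this computation and only spell out the modifications.

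Next I would pass from $V_{p,\mu}$ to $V_p=\sup_{\mu\in\bM}V_{p,\mu}$. The supremum of viscosity subsolutions of a proper elliptic equation is again a viscosity subsolution, provided the family is locally uniformly bounded and the sup is (locally) attained or at least upper semicontinuous with the usual relaxed-limit argument; since each $V_{p,\mu}\le V_p\le V_0=V^*\le \sup v<\infty$ on the relevant region and $\bM$ is a bounded parameter set whose closure can be handled by a limiting argument as in Step 2 of Lemma~\ref{lem:envelope-int-max}, this is routine. One subtlety is that the set $A^\mu_{v,\e}$ over which we have the subsolution property varies with $\mu$; I would check that for every $(\hat x,\hat t)\in A_{v,\e}\cap\{0<t<T\}$ there is a neighborhood contained in $A^\mu_{v,\e'}$ for all $\mu$ with $V_{p,\mu}(\hat x,\hat t)$ close to $V_p(\hat x,\hat t)$, which follows from the definition of $A_{v,\e}$ as an $\e$-interior of the $\al$-parabolically convex hull and a compactness argument on $\bM$.

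The main obstacle I expect is precisely the interface between the interior-maximum result and the PDE computation: one must ensure that the optimal points $(x_i,t_i)$ not only avoid $\{v=0\}$, $K\times[0,T]$ and $\{t=0\}$ (which Lemma~\ref{lem:envelope-int-max} gives), but also that $v$ is regular enough near each of them to run the viscosity inf-convolution argument — in particular that $v$ is a viscosity supersolution of $F(D^2v)-\partial_t v= h(v)$ up to the parabolic-interior points in question, and that the touching test functions produced have the nondegeneracy needed to invoke $1$-homogeneity. A secondary difficulty is bookkeeping the $\al$-rescaling $t\mapsto\tau=t^\al$ consistently: the clean statements live for $w_p$ on $\{\tau>0\}$, and one must translate the equation $F(D^2v)-\partial_t v\ge h(v)$ into the corresponding inequality for $w_p$ with the extra lower-order terms coming from $\partial_\tau(\tau^{1/\al})$, then check these terms have the favorable sign when $\al\ge1$ and $p<0$. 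I would handle both by first doing everything at the level of $w_p$ and $W_{p,\mu}$ (where \cite{IshLiuSal20} applies most directly after accounting for the obstacle), and only at the end translating back via $W_{p,\mu}(x,\tau)=V_{p,\mu}(x,\tau^{1/\al})$.
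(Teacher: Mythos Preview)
Your overall strategy aligns with the paper's: transform to $w_p(x,\tau)=v^p(x,\tau^{1/\al})$, apply the inf-convolution machinery of \cite{IshLiuSal20} to $W_{p,\mu}$ once interior attainment is secured by Lemma~\ref{lem:envelope-int-max}, translate back to $V_{p,\mu}$, and take the supremum over $\mu\in\bM$. The paper carries this out by introducing $G_{p,k}(r,\xi,X)=r^{k+1/p-2}F\bigl(-\tfrac{r}{p}X+\tfrac{p-1}{p^2}\xi\otimes\xi\bigr)+r^kh(r^{1/p})$ with $k=1-1/p$ and verifying the structural conditions (H1)--(H2) of \cite{IshLiuSal20}. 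One small correction to your sketch: under this scheme the right-hand side comes out \emph{exactly} as $h(V_{p,\mu})$, not as some $p$-deformed $\widetilde h_p$ that must then be compared to $h$; the convexity of $r\mapsto r^kh(r^{1/p})$ (and hence (H2)) holds for every $p<0$, and the threshold $\bar p$ arises solely from Lemma~\ref{lem:envelope-int-max}, not from any right-hand-side comparison.

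There is, however, a genuine gap. Your plan simply invokes Lemma~\ref{lem:envelope-int-max}, but that lemma carries the additional hypothesis \eqref{eq:initial-growth}, and nothing in the setup guarantees $\lim_{t\to0^+}(v^p(x,t)-v^p(x,0))/t^\al=-\infty$ for a solution $v$ of \eqref{eq:sol-const} whose initial datum is merely a weak subsolution $F(D^2v_0)\ge h(v_0)$. The paper does not verify \eqref{eq:initial-growth} for $v$ directly; instead it perturbs the operator to $F_{\tilde\e}:=F+\tilde\e$, builds for the corresponding solution $v_{\tilde\e}$ a barrier $\psi(x,t)=v_0(x)+\psi_0(x)t^\gamma$ with $\gamma\in(1,\al)$ and $\psi_0\in C^2$ supported in $\{v_0>0\}$ (the extra $\tilde\e$ is precisely what makes $F_{\tilde\e}(D^2\psi)-\partial_t\psi-h(\psi)\ge0$ near $t=0$), obtains $v_{\tilde\e}\ge\psi$ and hence \eqref{eq:initial-growth} for $v_{\tilde\e}$, proves the proposition for $v_{\tilde\e}$, and finally lets $\tilde\e\to0$. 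You flag the nonzero initial datum as a difficulty but propose no mechanism to handle it; this perturbation-and-limit step is the missing ingredient.
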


\begin{proof}[Proof of Proposition~\ref{prop:envelope-subsol}]
For $p\in(-\infty,0)$, let $G_{p,k}:(0,\infty)\times\R^n\times S(n)\to\R$ be defined by 
$$
G_{p,k}(r,\xi,X):=r^{k+\frac1p-2}F\left(-\frac{r}pX+\frac{p-1}{p^2}\xi\otimes\xi\right)+r^kh(r^{1/p}), \quad\text{where }k=1-1/p,
$$
which corresponds to (1.9) in \cite{IshLiuSal20}. We note that $w_p(x,\tau)=v^p(x,\tau^{1/\al})$ is nonincreasing in time. A direct computation shows that
\begin{align}
    \label{eq:v_p-pde}
    t^{1-\frac1\al}\partial_tw_p(x,t)+\frac{p}\al G_{p,k}(w_p,\D w_p,D^2w_p)=0.
\end{align}
We claim that the following two conditions, corresponding to (H1) and (H2) in \cite{IshLiuSal20}, respectively, are satisfied:
\begin{enumerate}
    \item[(C1)] Either $\frac1p-1+k\le0$ or $\al\left(\frac1p-1+k\right)\ge1$.
    \item[(C2)] For any $\mu\in\bM$, $r\in(0,\infty)$, $\xi\in\R^n\setminus\{0\}$ and $Y\in S(n)$, 
$$
G_{p,k}(r,\xi,Y)\le \sum_{i=1}^{n+2}\mu_iG_{p,k}(r_i,\xi,X_i)
$$
holds for $(r_i,X_i)\in(0,\infty)\times S(n)$ ($i=1,2,\cdots,n+2$) satisfying $\sum_i\mu_ir_i=r$ and 
\begin{align*}
    \text{sgn}^*(p)\left(\begin{matrix}
        \mu_1X_1& & & \\
        & \mu_2X_2 & &\\
        & & \ddots&\\
        &&& \mu_{n+2}X_{n+2}
    \end{matrix}  \right)\le \text{sgn}^*(p)\left(\begin{matrix}
        \mu_1^2Y&\mu_1\mu_2Y&\cdots&\mu_1\mu_{n+2}Y\\
        \mu_2\mu_1Y&\mu_2^2Y&\cdots&\mu_2\mu_{n+2}Y\\
        \vdots&\vdots&\ddots &\vdots\\
        \mu_{n+2}\mu_1Y&\mu_{n+2}\mu_2Y&\cdots&\mu_{n+2}^2Y
    \end{matrix} \right).
\end{align*}
\end{enumerate}
Indeed, (C1) simply holds due to our choice of $k=1-1/p$. For (C2), we observe that $r\longmapsto r^kh(r^{1/p})=\begin{cases}
\e_1^ar^k,\quad 0<r<\e_1^p,\\
r^{k+\frac ap},\quad r\ge\e_1^p\end{cases}$ is convex as $k=1-1/p>1$ and $a/p>0$. Moreover, the convexity of $F$ gives that for each fixed $\xi$, $(r,X)\longmapsto r^{k+\frac1p-2}F\left(-\frac{r}pX+\frac{p-1}{p^2}\xi\otimes\xi\right)$ is convex. Thus, $(r,X)\longmapsto G_{p,k}(r,\xi,X)$ is a convex function, and hence $G_{p,k}$ satisfies the condition (C2); see pages 348-349 in \cite{IshLiuSal20}.

Suppose the condition \eqref{eq:initial-growth} is true, which enables us to use Lemma~\ref{lem:envelope-int-max}. With this lemma, the monotonicity of $w_p$ in time and conditions (C1) and (C2) at hand, we can follow the argument in the proof of \cite[Lemma~4.2]{IshLiuSal20} to obtain that for $\bar p$ as in Lemma~\ref{lem:envelope-int-max}, $W_{p,\mu}$ is a supersolution of \eqref{eq:v_p-pde} for any $p\in(-\infty,\bar p)$. By \cite[Lemma~3.1]{IshLiuSal20}, we see that $V_{p,\mu}$ with $p\in(-\infty,\bar p)$ is a subsolution of
$$
F(D^2V_{p,\mu})-\partial_tV_{p,\mu}=h(V_{p,\mu})\quad\text{in }A_{v,\e}^\mu\cap\{0<t<T\}.
$$
This implies that $V_p=\sup_{\mu\in \bM}V_{p,\mu}$ is a subsolution.     

To close the argument, we need to prove \eqref{eq:initial-growth}. For this, we use the idea on pages 367-368 in \cite{IshLiuSal20}. For small $\tilde\e>0$, let $F_{\tilde\e}:=F+\tilde\e$ and let $v_{\tilde\e}$ be a solution of \eqref{eq:sol-const}, with $F$ replaced by $F_{\tilde\e}$. We take a nonnegative function $\psi_0\in C^2(\R^n)$ such that $\supp \psi_0\subset\supp v_0$. By continuity, we may assume $\al>1$. We take $\gamma\in(1,\al)$ and set $\psi(x,t):=v_0(x)+\psi_0(x)t^\gamma$. Then we have in $\{v_0>0\}\times(0,t_0)$ with small $t_0>0$
\begin{align*}
   F_{\tilde\e}(D^2\psi)-\partial_t\psi-h(\psi)
    & \ge F(D^2v_0+t^\gamma D^2\psi_0)+\tilde\e-\gamma t^{\gamma-1}\psi_0-h(\psi)\\
    &\ge F(D^2v_0)+t^\gamma \cM^-_{\la,\Lambda}(D^2\psi_0)+\tilde\e-\gamma r^{\gamma-1}\psi_0-h(\psi)\\
    &\ge h(v_0)-h(\psi)+t^\gamma \cM^-_{\la,\Lambda}(D^2\psi_0)-\gamma t^{\gamma-1}\psi_0+\tilde\e\\
    & \ge0,
\end{align*}
where in the last inequality we used that $h$ is nonincreasing and that $\gamma>1$. Since $\psi_0=0$ on $\partial\{v_0>0\}$, we have $v_{\tilde\e}\ge\psi$ on $\partial_p(\{v_0>0\}\times(0,t_0))$, thus the comparison principle implies $v_{\tilde\e}\ge\psi$ in $\{v_0>0\}\times(0,t_0)$. This, together with the strict inequality $\al>\gamma$, implies \eqref{eq:initial-growth} for $v_{\tilde\e}$:
\begin{align*}
    \lim_{t\to0+}\frac{v^p_{\tilde\e}(x,t)-v^p_{\tilde\e}(0,t)}{t^\al}\le \lim_{t\to0+}\frac{(v_0(x)+\psi_0(x)t^\gamma)^p-v_0^p(x)}{t^\al}=-\infty.
\end{align*}
This proves Proposition~\ref{prop:envelope-subsol} for $v_{\tilde\e}$. 
We then pass to the limit $\tilde\e\to0$ to conclude Proposition~\ref{prop:envelope-subsol} for $v$.
\end{proof}


\begin{lemma}
    \label{lem:p-quasi-approx}
    Let $\mathcal D$ be a bounded domain $\R^{n+1}$ and $u:\overline{\mathcal D}\to\R$ be a continuous function. For $\al\in(0,\infty)$ and $p\in(-\infty,0)$, we denote by $v_p$ the $\al$-parabolically $p$-concave envelope of $\mathcal D$ and $v^*$ the $\al$-parabolically quasiconcave envelope of $v$ in $\mathcal D$. Then $v_p\to v^*$ uniformly in $\overline{\mathcal D}$ as $p\to-\infty$.
\end{lemma}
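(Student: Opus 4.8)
We argue in three steps: a pointwise monotone limit coming from power means, the identification of that limit as $v^*$ via a compactness argument, and a Dini‑type upgrade to uniform convergence. Write $u$ for the given continuous function on $\overline{\mathcal D}$ (we may assume $u>0$ there, otherwise replace $\mathcal D$ by $\{u>0\}$), so that $v_p$ and $v^*$ are the suprema, over $\mu\in\bM$ and over $(n+2)$-tuples of points of $\{u>0\}$ subject to the $\al$-parabolic combination constraints $\hat x=\sum_i\mu_ix_i$, $\hat t=(\sum_i\mu_it_i^\al)^{1/\al}$, of $M_p(u(x_i,t_i);\mu)$ and of $\min_iu(x_i,t_i)$ respectively. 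Recall the elementary facts that for fixed weights $\mu\in\bM$ and positive data, $p\mapsto M_p(\,\cdot\,;\mu)$ is non-decreasing, $M_p\ge M_{-\infty}=\min$ for $p<0$, and $M_p\to\min$ as $p\to-\infty$. Since the two envelopes are built from the same families of admissible combinations, these give $v_p\ge v_q\ge v^*$ whenever $p\le q<0$; hence $v_p$ decreases, as $p\to-\infty$, to a function $\bar v\ge v^*$ on $\overline{\mathcal D}$, and it suffices to prove $\bar v\le v^*$ and that the convergence is uniform.

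To see $\bar v\le v^*$, fix $z=(\hat x,\hat t)$; we may assume $\bar v(z)>0$, which is the only interesting case since both $\{v_p>0\}$ and $\{v^*>0\}$ coincide with the $\al$-parabolically convex hull of $\{u>0\}$. Take $p_j\to-\infty$ and, for each $j$, a weight $\mu^j\in\bM$ and an admissible tuple $(x_i^j,t_i^j)_{i=1}^{n+2}\subset\{u>0\}$ realizing $z$ with $M_{p_j}(u(x_i^j,t_i^j);\mu^j)\ge v_{p_j}(z)-1/j$. By compactness of $\overline{\mathcal D}$ and of the closed simplex $\overline{\bM}$, pass to a subsequence with $\mu^j\to\mu^\infty$ and $(x_i^j,t_i^j)\to(x_i^\infty,t_i^\infty)\in\overline{\mathcal D}$; the limiting tuple still realizes $z$, and $u(x_i^\infty,t_i^\infty)>0$ for every index with $\mu_i^\infty>0$ (if $u(x_i^j,t_i^j)\to0$ then $u(x_i^j,t_i^j)^{p_j}\to+\infty$, forcing $M_{p_j}\to0$, against $M_{p_j}\to\bar v(z)>0$). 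Now fix $q<0$; for large $j$ we have $p_j<q$, so by the monotonicity of $p$-means $v_{p_j}(z)-1/j\le M_{p_j}(u(x_i^j,t_i^j);\mu^j)\le M_q(u(x_i^j,t_i^j);\mu^j)$. Letting $j\to\infty$ and using continuity of $M_q$ in its (positive) arguments and weights gives $\bar v(z)\le M_q(u(x_i^\infty,t_i^\infty);\mu^\infty)$, and then letting $q\to-\infty$ gives $\bar v(z)\le\min\{u(x_i^\infty,t_i^\infty):\mu_i^\infty>0\}$. Since the limiting combination (dropping vanishing weights and, if needed, padding back to $n+2$ points by repeating one) is admissible for the quasiconcave envelope, the right-hand side is $\le v^*(z)$, so $\bar v(z)\le v^*(z)$. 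Thus $v_p\downarrow v^*$ pointwise on $\overline{\mathcal D}$.

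Finally, I upgrade this to uniform convergence. The set $\overline{\mathcal D}$ is compact, $p\mapsto v_p$ is monotone, and each $v_p$ is upper semicontinuous while $v^*$ is continuous: under the change of variables $\tau=t^\al$ the $\al$-parabolic $p$-concavity of $v_p$ becomes ordinary $p$-concavity, i.e. convexity of $v_p(x,\tau^{1/\al})^{p}$, which is lower semicontinuous, whence $v_p$ is upper semicontinuous; and $v^*$ is upper semicontinuous as a decreasing limit of the $v_p$, and lower semicontinuous because its strict superlevel sets are $\al$-parabolically convex hulls of the (open) superlevel sets of $u$, hence open. Therefore each set $\{z\in\overline{\mathcal D}:v_p(z)<v^*(z)+\e\}$ is open, these sets increase as $p\to-\infty$, and by the pointwise convergence they exhaust $\overline{\mathcal D}$; a finite subcover together with the monotonicity yields a single $p$ with $v_p<v^*+\e$ on all of $\overline{\mathcal D}$. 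Combined with $v_p\ge v^*$ this gives $\sup_{\overline{\mathcal D}}|v_p-v^*|\to0$.

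The main difficulty is the second step: the supremum defining $v_p(z)$ need not be attained and the near-optimal weights may degenerate (some $\mu_i\to0$) in the limit, so some care is needed to keep the limiting combination admissible and its nodes inside $\{u>0\}$. The semicontinuity facts used in the last step are routine once one performs the parabolic change of variables $t\mapsto t^\al$ reducing $\al$-parabolic $p$-concavity and quasiconcavity to the genuinely convex setting.
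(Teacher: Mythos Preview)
Your core compactness argument (Step~2) is exactly the one the paper uses, but the paper deploys it more efficiently: rather than fixing $z$ and proving pointwise convergence, it takes $\bar z_p\in\arg\max_{\overline{\mathcal D}}(v_p-v^*)$, extracts near-optimal $\mu_p,\{z_{i,p}\}$ at $\bar z_p$, passes to a subsequential limit, and bounds
\[
\limsup_{p\to-\infty}\max_{\overline{\mathcal D}}(v_p-v^*)\le M_q(u(z_i);\mu)-\min_i u(z_i)\to 0\quad (q\to-\infty).
\]
This one pass gives uniform convergence directly and dispenses with your Dini Step~3 entirely. What you gain by your route is a cleaner separation of ideas (pointwise limit, then upgrade); what you pay is the need to establish the semicontinuity claims that Dini requires, which is nontrivial. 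In particular, your assertion that $v_p^p$ is convex ``hence lower semicontinuous'' is not valid as stated --- convex functions on closed convex sets need not be lsc at the boundary --- though it is true that the \emph{convex envelope} of a lower semicontinuous (here continuous, once $u>0$) function on a compact convex set is continuous, which is what you actually need. Your reduction ``replace $\mathcal D$ by $\{u>0\}$'' also needs a word, since $\{u>0\}$ is not compact and the envelopes live on the $\al$-parabolically convex hull of $\{u>0\}$. Finally, note the slip in Step~1: monotonicity of $p\mapsto M_p$ gives $v_p\le v_q$ for $p\le q<0$, not $v_p\ge v_q$; your stated conclusion that $v_p$ decreases as $p\to-\infty$ is nonetheless correct.
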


\begin{proof}
The proof of similar to that of its elliptic counterpart \cite[Theorem~4.3]{CuoSal06}. However, we present the full proof for the sake of completeness. We note that $v_p\searrow v^*$ pointwise in $\overline{\mathcal D}$ as $p\to-\infty$. For each $p<0$, we take $\bar z_p=(\bar x_p, \bar t_p)\in\overline{\mathcal D}$ such that $v_p(\bar z_p)-v^*(\bar z_p)=\max_{z\in\overline{\mathcal D}}(v_p(z)-v^*(z))$. We also take $\mu_p=(\mu_{1,p},\cdots,\mu_{n+2,p})\in \bM$ and $\{z_{i,p}=(x_{i,p},t_{i,p})\}_{i=1}^{n+2}$ such that $\bar x_p=\sum_{i=1}^{n+2}\mu_{i,p}x_{i,p}$, $\bar t_p=\left(\sum_{i=1}^{n+2}\mu_{i,p}t_{i,p}^\al\right)^{1/\al}$ and $v_p(\bar z_p)=\left(\sum_{i=1}^{n+2}\mu_{i,p}v(z_{i,p})^p\right)^{1/p}$. Fixing $q<0$, we have that for every $p<q$
\begin{align*}
    v_p(\bar z_p)-v^*(\bar z_p)&=\left(\sum_{i=1}^{n+2}\mu_{i,p}v(z_{i,p})^p\right)^{1/p}-v^*(\bar z_p)\\
    &\le \left(\sum_{i=1}^{n+2}\mu_{i,p}v(z_{i,p})^q\right)^{1/q}-\min\{v(z_{1,p}),\cdots, v(z_{n+2,p})\}.
\end{align*}
Over a subsequence, $\mu_p\to \mu=(\mu_1,\cdots,\mu_{n+2})\in \bM$ and $z_{i,p}\to z_i\in\overline{\mathcal D}$ as $p\to-\infty$. It then follows that
\begin{align*}
    \limsup_{p\to-\infty}\max_{z\in\overline{\mathcal D}}|v_p(z)-v^*(z)|&=\limsup_{p\to-\infty}(v_p(\bar z_p)-v^*(\bar z_p))\\
    &\le\left(\sum_{i=1}^{n+2}\mu_iv(z_i)^q\right)^{1/q}-\min\{v(z_1),\cdots,v(z_{n+2})\}.
\end{align*}
Finally, by taking $q\to-\infty$, we conclude the lemma.    
\end{proof}

From Proposition~\ref{prop:envelope-subsol} and Lemma~\ref{lem:p-quasi-approx}, we immediately obtain the following subsolution property of the $\al$-parabolically quasiconcave envelope of $v$.

\begin{corollary}
    \label{cor:quasi-con-subsol}
    Let $v$ be as in Lemma~\ref{lem:envelope-int-max}. Then, for any $\al\ge1$, the $\al$-parabolically quasiconcave envelope $v^*$ of $v$ satisfies
    $$
    F(D^2v^*)-\partial_tv^* \geq h(v^*)\quad\text{in }\{v^*>0\}\setminus (K\times[0,T]).
    $$
\end{corollary}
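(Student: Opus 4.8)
The plan is to combine the two immediately preceding results in a straightforward limiting argument. By Proposition~\ref{prop:envelope-subsol}, for every $\e\in(0,1/2)$ and $\al\ge1$ there exists $\bar p=\bar p(\e)<0$ such that for all $p<\bar p$ the $\al$-parabolically $p$-concave envelope $V_p$ is a (viscosity) subsolution of $F(D^2V_p)-\partial_tV_p\ge h(V_p)$ in $A_{v,\e}\cap\{0<t<T\}$. By Lemma~\ref{lem:p-quasi-approx} (applied with $\mathcal D$ a bounded domain containing $A_v$, and noting $V_p$, $V^*$ coincide with the envelopes $v_p$, $v^*$ of that lemma), $V_p\to V^*=v^*$ uniformly on compact sets as $p\to-\infty$. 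The idea is then to pass to the limit in the viscosity inequality using stability of viscosity subsolutions under uniform (indeed locally uniform) convergence, together with the fact that $h$ is continuous where $V^*>0$, in order to conclude $F(D^2v^*)-\partial_tv^*\ge h(v^*)$ in $A_{v,\e}\cap\{0<t<T\}$; then let $\e\to0$ and use that $\bigcup_{\e>0}A_{v,\e}$ exhausts the interior of $A_v$ in $\{0<t<T\}$, and finally note $\{v^*>0\}\subset A_v$ up to the lateral/initial boundary.

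Concretely, I would carry out the steps in this order. First, fix $\e>0$, a point $z_0=(x_0,t_0)\in A_{v,\e}\cap\{0<t<T\}$ with $v^*(z_0)>0$, and a $C^2$ test function $\phi$ touching $v^*$ from above at $z_0$; by shrinking, assume the touching is strict on a small parabolic cylinder $Q\Subset A_{v,\e}\cap\{0<t<T\}$ and that $v^*>0$ on $\overline Q$. Second, since $V_p\to v^*$ uniformly on $\overline Q$ and $v^*-\phi$ has a strict max at $z_0$ over $\overline Q$, standard viscosity-solution perturbation theory produces points $z_p\to z_0$ at which $V_p-\phi$ attains a local maximum over $Q$; for $p$ large enough $z_p\in A_{v,\e}\cap\{0<t<T\}$ and $V_p(z_p)>0$, so the subsolution inequality from Proposition~\ref{prop:envelope-subsol} gives $F(D^2\phi(z_p))-\partial_t\phi(z_p)\ge h(V_p(z_p))$. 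Third, let $p\to-\infty$: the left side converges to $F(D^2\phi(z_0))-\partial_t\phi(z_0)$ by continuity of $F$, and the right side converges to $h(v^*(z_0))$ since $V_p(z_p)\to v^*(z_0)>0$ and $h$ is continuous on $(0,\infty)$. Hence $F(D^2\phi(z_0))-\partial_t\phi(z_0)\ge h(v^*(z_0))$, which is exactly the viscosity subsolution inequality for $v^*$ at $z_0$. Fourth, let $\e\downarrow0$: every point of $\operatorname{int}(A_v)\cap\{0<t<T\}$, hence every point of $\{v^*>0\}\setminus(K\times[0,T])$ lying in $\{0<t<T\}$, belongs to $A_{v,\e}$ for small $\e$, so the inequality holds there; and since $T$ is arbitrary and $v$ is defined on $\R^n\times(0,\infty)$, one obtains the inequality on all of $\{v^*>0\}\setminus(K\times[0,T])$.

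The main obstacle I expect is the bookkeeping around where $h$ is continuous versus where it jumps. Because $h$ as defined in \eqref{eq:assump-h} is continuous on $\R$ (it is $0$ for $s\le0$, the constant $\e_1^a$ on $(0,\e_1)$, and $s^a$ for $s\ge\e_1$, matching at $s=\e_1$), this is actually not an issue here: $h$ is continuous everywhere, so the passage $h(V_p(z_p))\to h(v^*(z_0))$ is immediate from $V_p(z_p)\to v^*(z_0)$ regardless of whether the limit lands at $0$ or at $\e_1$. (Had $h$ been the genuinely singular $g(s)=s^a\chi_{\{s>0\}}$, one would need $v^*(z_0)>0$ to avoid the discontinuity at the origin, which is exactly why the statement restricts to $\{v^*>0\}$.) A second, more technical point is ensuring that the approximating maximum points $z_p$ stay in the open set $A_{v,\e}\cap\{0<t<T\}$ where the Proposition~\ref{prop:envelope-subsol} inequality is available; this is handled by the strictness of the touching and $z_p\to z_0$, together with the fact that $z_0$ is an interior point of that set — which is why we first pass to a fixed $\e$ and only afterwards send $\e\to0$.

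\begin{proof}[Proof of Corollary~\ref{cor:quasi-con-subsol}]
Fix $\al\ge1$ and $T>0$. Let $z_0=(x_0,t_0)\in(\{v^*>0\}\setminus(K\times[0,T]))\cap\{0<t<T\}$ and let $\phi\in C^2$ touch $v^*$ from above at $z_0$. Choose $\e\in(0,1/2)$ small enough that $z_0\in A_{v,\e}\cap\{0<t<T\}$ and a parabolic cylinder $Q\Subset A_{v,\e}\cap\{0<t<T\}$ centered at $z_0$ on which $v^*>0$ and $v^*-\phi$ has a strict maximum at $z_0$ (after subtracting $|z-z_0|^4$ from $\phi$ if needed). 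By Lemma~\ref{lem:p-quasi-approx}, $V_p\to v^*$ uniformly on $\overline Q$ as $p\to-\infty$, so there exist points $z_p\in Q$ at which $V_p-\phi$ attains a local maximum, with $z_p\to z_0$; for $p<\bar p$ (with $\bar p=\bar p(\e)$ from Proposition~\ref{prop:envelope-subsol}) sufficiently negative we have $z_p\in A_{v,\e}\cap\{0<t<T\}$ and $V_p(z_p)>0$. Proposition~\ref{prop:envelope-subsol} then gives
$$
F(D^2\phi(z_p))-\partial_t\phi(z_p)\ge h(V_p(z_p)).
$$
Letting $p\to-\infty$ and using the continuity of $F$, of $\phi$ and $D^2\phi$, of $h$, and the convergence $V_p(z_p)\to v^*(z_0)$, we obtain
$$
F(D^2\phi(z_0))-\partial_t\phi(z_0)\ge h(v^*(z_0)).
$$
Since $\phi$ and $z_0$ were arbitrary, $v^*$ is a viscosity subsolution of $F(D^2v^*)-\partial_tv^*=h(v^*)$ in $(\{v^*>0\}\setminus(K\times[0,T]))\cap\{0<t<T\}$. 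As $T>0$ was arbitrary and $v^*$ is defined on $\R^n\times(0,\infty)$, the inequality holds in $\{v^*>0\}\setminus(K\times[0,T])$.
\end{proof}
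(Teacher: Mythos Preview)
Your approach is correct and is exactly what the paper does: it simply states that the corollary follows ``immediately'' from Proposition~\ref{prop:envelope-subsol} and Lemma~\ref{lem:p-quasi-approx}, and you have supplied the standard viscosity-stability details that make this precise.

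One small correction to your commentary (not to the proof itself): $h$ as defined in \eqref{eq:assump-h} is \emph{not} continuous on all of $\R$; it jumps from $0$ to $\e_1^a>0$ at $s=0$. This does not affect your argument, since you correctly work at points where $v^*(z_0)>0$, and there $V_p(z_p)\to v^*(z_0)>0$ with $h$ continuous on $(0,\infty)$. Indeed, since $V_p\ge v^*$ (the envelopes decrease to $v^*$ as $p\to-\infty$) and $v^*>0$ on $\overline Q$, you automatically have $V_p(z_p)>0$, so the passage to the limit in $h(V_p(z_p))$ is legitimate. Just delete the parenthetical claim that $h$ is continuous at $0$.
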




\section{Existence of a quasiconcave solution}\label{sec:quasi-con}
The objective of this section is to prove Proposition~\ref{prop:quasiconcave-const} and utilize it to establish Theorem~\ref{thm:main-exterior}. To prove Proposition~\ref{prop:quasiconcave-const}, we need some auxiliary results, Lemma~\ref{lem:quasiconcave-const-approx} and Lemma~\ref{lem:sol-supp}. 


\begin{lemma}\label{lem:quasiconcave-const-approx}
Let $h$, $v_0$, $K$ and $R_0$ be as in Proposition~\ref{prop:quasiconcave-const}. Then there exists a nonnegative function $v$ in $B_{R_0}\times(0,\infty)$ that satisfies
\begin{align}
    \label{eq:sol-const-approx}
    \begin{cases}
    F(D^2v)-\partial_tv=h(v) &\text{in } (B_{R_0}\setminus K)\times(0,\infty),\\
    v=v_0&\text{on }\{t=0\},\\
    v=1&\text{in } K\times[0,\infty),\\
    v=0&\text{on }\partial B_{R_0}\times(0,\infty).
\end{cases}\end{align} 

Moreover, $v$ satisfies the growth condition \eqref{eq:reg-FN-FB} and the support condition $\{v_0>0\}\times [0,\infty)\subset \supp v \subset B_{R_0}\times[0,\infty)$, and it is nondecreasing in time-variable in $B_{R_0}\times(0,\infty)$.
\end{lemma}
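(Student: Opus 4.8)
The plan is to produce $v$ as a monotone limit of solutions of Cauchy--Dirichlet problems for a Lipschitz regularization of $h$, squeezed between the time-independent subsolution $v_0$ and the stationary supersolution $v_\infty$, and then to extract the growth bound \eqref{eq:reg-FN-FB} from the free boundary regularity for the singular equation. For the sub/supersolutions: since $\{v_\infty>0\}\supset\{v_0>0\}$ and $\supp v_\infty\Subset B_{R_0}$ we have $\supp v_0\Subset B_{R_0}$, so the data in \eqref{eq:sol-const-approx} are continuous and compatible at the corners $(\partial K\cup\partial B_{R_0})\times\{0\}$. Using the assumption $F(D^2v_0)\ge h(v_0)$ in $\{v_0>0\}\setminus K$ (the contact points where $v_0=0$ being immediate, as $v_0\ge0$ forces any test function from above to be spatially nonnegatively curved and $t$-stationary there), the function $(x,t)\mapsto v_0(x)$ is a viscosity subsolution of $F(D^2w)-\partial_tw=h(w)$ in $(B_{R_0}\setminus K)\times(0,\infty)$ with exactly the boundary and initial values of \eqref{eq:sol-const-approx}. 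On the other hand $(x,t)\mapsto v_\infty(x)$ is there a stationary solution, hence a supersolution, with $v_\infty\ge v_0$, $v_\infty=1$ on $K$, $v_\infty=0$ on $\partial B_{R_0}$ and $\supp v_\infty\Subset B_{R_0}$.

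\emph{Regularization and existence.} As $h$ jumps upward at $0$, fix $\delta\in(0,\e_1)$ and let $h_\delta$ coincide with $h$ on $(-\infty,0]\cup[\delta,\infty)$ and be linear on $[0,\delta]$; then $h_\delta$ is bounded, Lipschitz, $0\le h_\delta\le h$, and $h_\delta\uparrow h$ pointwise on $(0,\infty)$ as $\delta\downarrow0$, while $v_0$, $v_\infty$ remain a sub- resp.\ supersolution of $F(D^2w)-\partial_tw=h_\delta(w)$. Since $F$ is uniformly elliptic and $h_\delta$ bounded and Lipschitz, the Cauchy--Dirichlet problem \eqref{eq:sol-const-approx} with $h$ replaced by $h_\delta$ is well posed --- barriers are available because $\partial B_{R_0}$ is smooth and $\partial K$ is Lipschitz ($K$ being convex with nonempty interior), and the data are compatible at $t=0$ --- giving a unique viscosity solution $v_\delta\in C(\overline{B_{R_0}}\times[0,\infty))$. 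Comparison yields $0\le v_0\le v_\delta\le v_\infty$, hence $\{v_0>0\}\times[0,\infty)\subset\{v_\delta>0\}$ and $\supp v_\delta\subset\supp v_\infty\times[0,\infty)\Subset B_{R_0}\times[0,\infty)$. Moreover $v_\delta$ is nondecreasing in $t$: for $s>0$, $v_\delta(\cdot,\cdot+s)$ and $v_\delta$ solve the same autonomous equation with the same lateral data, and at $t=0$ one has $v_\delta(x,s)\ge v_0(x)=v_\delta(x,0)$ (since $v_\delta\ge v_0$), so comparison gives $v_\delta(\cdot,\cdot+s)\ge v_\delta$.

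\emph{Passing to the limit.} If $\delta'<\delta$ then $h_{\delta'}\ge h_\delta$, so $v_{\delta'}$ is a subsolution of the $h_\delta$-problem with the same parabolic-boundary data as $v_\delta$, whence $v_{\delta'}\le v_\delta$; thus $v_\delta\uparrow v$ with $v_0\le v\le v_\infty$. The bound $\|h_\delta(v_\delta)\|_{L^\infty}\le\e_1^a$ (uniform in $\delta$) and the Krylov--Safonov interior $C^\alpha$ estimate, together with boundary barriers, provide a modulus of continuity for $\{v_\delta\}$ uniform in $\delta$; hence $v\in C(\overline{B_{R_0}}\times[0,\infty))$, $v_\delta\to v$ uniformly, and the time-monotonicity, the support inclusions and the boundary and initial values pass to the limit. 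At any point of $\{v>0\}$, for $\delta$ small $v_\delta$ stays bounded away from $0$ nearby with $h_\delta=h$ there, so the right-hand sides converge uniformly and the stability of viscosity solutions yields $F(D^2v)-\partial_tv=h(v)$ in $\{v>0\}\setminus(K\times[0,\infty))$; in the interior of $\{v=0\}$ the equation $F(D^2v)-\partial_tv=h(v)$ holds trivially (both sides vanish), and the crossing of $\partial\{v>0\}$ is governed by the growth bound below (which in particular gives $|\D v|=0$ on the free boundary). This establishes \eqref{eq:sol-const-approx}, the support condition, and the monotonicity in time.

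\emph{The growth estimate is the main obstacle.} Since $0\le h(s)\le s^a$ for $s>0$, $v$ is a viscosity subsolution of the genuinely singular equation $F(D^2\cdot)-\partial_t\cdot=(\cdot)^a$. Running the regularity theory for singular fully nonlinear parabolic free boundary problems --- as in \cite{AraSaUrb23} (and \cite{AraTei13,JeoSha23} in the elliptic case) --- one compares $v$ near a free boundary point $z_0$ with a barrier modeled on $c\,\dist(\cdot,\partial\{v>0\})^{\be}$, where $\be=2/(1-a)$ is exactly the exponent balancing the two sides of the singular equation, and iterates over dyadic parabolic cylinders to get $\sup_{Q_r(z_0)}v\le Cr^{\be}=Cr^{1+\gamma}$ for every $Q_r(z_0)\Subset(\R^n\setminus K)\times(0,\infty)$ with $z_0\in\partial\{v>0\}\cap\{t>0\}$, where $\gamma=\be-1=\tfrac{1+a}{1-a}\in(0,1)$. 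The delicate point --- and the reason this is the hard part --- is that $C$ and $\gamma$ must depend only on $a,\la,\Lambda,n$ and $v_0$, and in particular be independent of the regularization parameters $\delta$ and $\e_1$: this is possible because the comparison right-hand side $s^a$ and the exponent $\be$ do not involve $\e_1$, whereas the naive $C^{1,\alpha}$ estimate for the bounded right-hand side $h_\delta$ would yield a constant blowing up like $\e_1^{a}$ as $\e_1\to0$. Everything else --- well-posedness, comparison, stability and the support inclusions --- is routine.
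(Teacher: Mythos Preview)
The central gap is in your support-condition argument. You assert that $v_\infty$ remains a supersolution of $F(D^2w)-\partial_tw=h_\delta(w)$, so that comparison yields $v_\delta\le v_\infty$ and hence $\supp v_\delta\subset\supp v_\infty\Subset B_{R_0}$. But the inequality goes the wrong way: since your regularization satisfies $h_\delta\le h$ (with strict inequality on $(0,\delta)$), the stationary solution $v_\infty$ of $F(D^2v_\infty)=h(v_\infty)$ obeys
\[
F(D^2v_\infty)-\partial_tv_\infty=h(v_\infty)\ge h_\delta(v_\infty),
\]
so $v_\infty$ is a \emph{sub}solution of the $h_\delta$-equation, not a supersolution. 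Comparison therefore does not give $v_\delta\le v_\infty$, and nothing in your argument prevents $v_\delta$ from staying positive all the way out to $\partial B_{R_0}$. The inclusion $\supp v\subset B_{R_0}\times[0,\infty)$ --- needed later to extend $v$ by zero across $\partial B_{R_0}$ and obtain a solution of \eqref{eq:sol-const} --- is left unproved.

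The paper avoids this trap by decoupling existence from the support bound. For existence it brackets the penalized solutions between a parabolic obstacle-type subsolution $v_\sharp$ (solving $F(D^2v_\sharp)-\partial_tv_\sharp=\e_1^a\chi_{\{v_\sharp>0\}}$) and the \emph{homogeneous} supersolution $v^\sharp$; neither barrier is $v_\infty$. The bound $v\le v_\infty$ is then proved \emph{a posteriori} for the limit $v$, using the strict elliptic barriers $v_\infty^k$ of \eqref{eq:v-infty-k} with boundary value $1+\tfrac1k>1$ on $K$: the strict inequality $v_\infty^k>v$ holds at $t=0$ by compactness, and on $\{v>0\}$ the fact that $h$ is nonincreasing gives
\[
\cM^-_{\la,\Lambda}\bigl(D^2(v_\infty^k-v)\bigr)-\partial_t(v_\infty^k-v)\le h(v_\infty^k)-h(v)\le0,
\]
so the strong minimum principle forbids a first touching point. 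This strict-barrier-plus-monotonicity-of-$h$ device is precisely what a soft comparison with $v_\infty$ itself cannot replace. (A minor related slip: your $\delta$-monotonicity yields $v_\delta\downarrow$ as $\delta\downarrow0$, not $v_\delta\uparrow v$.)
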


\begin{proof}
We split the proof into several steps.

\medskip\noindent\emph{Step 1 (Existence \& growth condition):}
For $\e_1>0$ as in \eqref{eq:assump-h}, we consider two functions $v_\sharp$ and $v^\sharp$, satisfying 
\begin{align}\label{eq:subsol-perron}
    \begin{cases}
        F(D^2v_\sharp)-\partial_tv_\sharp=\e_1^a\chi_{\{v_\sharp> 0 \}} &\text{in }B_{R_0}\times(0,\infty),\\
        v_\sharp=v_0&\text{on }\{t=0\},\\
        v_\sharp=1&\text{in }K\times[0,\infty),\\
        v_\sharp=0&\text{on }\partial B_{R_0}\times(0,\infty),
    \end{cases}
\end{align}
and
\begin{align}\label{eq:supersol}
    \begin{cases}
        F(D^2v^\sharp)-\partial_tv^\sharp=0&\text{in }B_{R_0}\times(0,\infty),\\
        v^\sharp=v_0&\text{on }\{t=0\},\\
        v^\sharp=1&\text{in }K\times[0,\infty),\\
        v^\sharp=0&\text{on }\partial B_{R_0}\times(0,\infty),
    \end{cases}
\end{align}
respectively. We note that their existence follows from the standard Perron's method (smallest super-solution) and $v^\sharp\ge0$ in $B_{R_0}\times(0,\infty)$ due to the minimum principle. We claim that
\begin{align}\label{eq:subsol}
v_\sharp\ge 0\,\,\,\text{in}\,\,\, B_{R_0}\times(0,\infty).
\end{align}
To prove it by contradiction, we assume to the contrary for some $T\in(0,\infty)$
$$
A:=\{z\in(B_{R_0}\setminus K)\times[0,T)\,:\, v_\sharp(z)<0\}\neq\emptyset.
$$
Clearly, $F(D^2v_\sharp)-\partial_tv_\sharp=0$ in $A$. In addition, since $v_\sharp\ge0$ on $\partial_p((B_{R_0}\setminus K)\times[0,T))$, we have $v_\sharp=0$ on $\partial_pA$. Those violate the minimum principle.

To find a solution of \eqref{eq:sol-const-approx} by making use of $v_\sharp$ and $v^\sharp$, we use the penalization method adopted in \cite{AraSaUrb23}. Let $\vp\in C^\infty(\R)$ be compactly supported in $[0,1]$ with $\int_0^1\vp(\rho)d\rho=1$. For $\beta=\frac2{1-a}\in (1,2)$ and a parameter $0<\sigma_0<1$, define for each $j\in \mathbb{N}$
$$
\mathcal{B}_j(s):=\int_0^{\frac{s-\sigma_0j^{-\beta}}{j^{-\beta}}}\vp(\rho)d\rho,\quad s\in \R,
$$
and notice that $\mathcal{B}_j$ is an approximation of the characteristic function $\chi_{(0,\infty)}$. For a Lipschitz function $h_j:=\mathcal{B}_j h$, we then consider the problem 
\begin{align}
    \label{eq:sol-const-reg}
    \begin{cases}
    F(D^2v^j)-\partial_tv^j=h_j(v^j)&\text{in }(B_{R_0}\setminus K)\times(0,\infty),\\
    v^j=v_0&\text{on }\{t=0\},\\
    v^j=1&\text{in } K\times[0,\infty),\\
    v^j=0&\text{in }\partial B_{R_0}\times(0,\infty).
    \end{cases}
\end{align}
Clearly, $v_\sharp$ and $v^\sharp$ are sub- respectively   supersolution to
\begin{align}
    \label{eq:sol-const-reg-pde}
    F(D^2w)-\partial_tw=h_j(w)\quad\text{in }(B_{R_0}\setminus K)\times(0,\infty).
\end{align}
Let
\begin{align*}
    \mathcal{S}^j:=\{w\in C(B_{R_0}\times [0,\infty))\,:\,v_\sharp\le w\le v^\sharp\text{ and }w \text{ is a supersolution of }\eqref{eq:sol-const-reg-pde}\}.
\end{align*}
Then,
\begin{align}
    \label{sol-const-reg}
    v^j:=\inf_{w\in \mathcal{S}^j}w
\end{align}
is a solution of \eqref{eq:sol-const-reg-pde}, see e.g. \cite[Theorem~3.1]{RicTeyUrb19}. 
From $v_\sharp\le v^j\le v^\sharp$, it is evident that $v^j$ satisfies \eqref{eq:sol-const-reg}. Furthermore, utilizing $0\le h_j(v^j)\le (v^j)^a\chi_{\{v^j>0\}}$, we can easily verify that the argument supporting the regularity of solutions to the regularized problem in \cite{AraSaUrb23} is applicable in our context. Since $v^j\le 1$ by the maximum principle, there exists a constant $C>0$, independent of $j$, such that
\begin{align*}
    \sup_{Q_r(z_0)}v^j\le Cr^{1+\gamma}
\end{align*}
whenever $z_0\in \partial\{v^j>0\}\cap\{t>0\}$ and $Q_r(z_0)\Subset (B_{R_0}\setminus K)\times(0,\infty)$. By the Arzelà-Ascoli Theorem, there exists a continuous function $v$ in $(B_{R_0}\setminus K)\times(0,\infty)$ such that, along a subsequence, $v^j\to v$ uniformly on compact subsets of $(B_{R_0}\setminus K)\times(0,\infty)$. Letting $v=1$ in $K\times(0,\infty)$ and considering the uniform convergence and the fact that $v_\sharp\le v^j\le v^\sharp$, we deduce that $v$ satisfies \eqref{eq:sol-const-approx} and the growth condition \eqref{eq:reg-FN-FB}.

\medskip\noindent\emph{Step 2 (Monotonicity):}
In this step, we establish that each solution $v^j$ of \eqref{eq:sol-const-reg} is nondecreasing in the time variable within $B_{R_0}\times(0,\infty)$. Notably, this property readily extends to $v$ due to the uniform convergence $v^j\to v$. To demonstrate the monotonicity of $v^j$, we extend $v_0$ from $\R^n$ to $\R^n\times[0,\infty)$ by

$$
v_0(x,t):=v_0(x).
$$
We first claim that
\begin{align}
    \label{eq:v-v_0-ineq}
    v^j\ge v_0\quad\text{in }\{v_0>0\}\times(0,\infty).
\end{align}

To prove the claim, we assume to the contrary that for some $T>0$, $A_0:=\{v^j<v_0\}\cap((\{v_0>0\}\setminus K)\times(0,T])$ is nonempty. Since $v^j\ge v_0$ on $\partial_p((\{v_0>0\}\setminus K)\times(0,T])$, we have $v^j=v_0$ on $\partial_pA_0$. Moreover, observe that $h_j=\mathcal{B}_jh\le h$ as $0\le\mathcal{B}_j\le1$. This, along with the Lipschitz regularity of $h_j$, gives that for some constant $c_j>0$, we have in $A_0$
\begin{align*}
    \cM^+_{\la,\Lambda}(D^2(v_0-v^j))-\partial_t(v_0-v^j)&\ge (F(D^2v_0)-\partial_tv_0)-(F(D^2v^j)-\partial_tv^j)\\
    &\ge h(v_0)-h_j(v^j)\ge h_j(v_0)-h_j(v^j)\\
    &\ge -c_j(v_0-v^j).
\end{align*}
Now consider the function  
$$
w(x,t):=e^{-c_jt}(v_0-v^j)(x,t)
$$
which satisfies  $w>0$ in $A_0$ and $w=0$ on $\partial_pA_0$. Moreover,  in $A_0$
\begin{align*}
    \cM^+_{\la,\Lambda}(D^2w)-\partial_tw=e^{-c_jt}\left(\cM^+_{\la,\Lambda}(D^2(v_0-v^j))-\partial_t(v_0-v^j)\right)+c_je^{-c_jt}(v_0-v^j)\ge 0.
\end{align*}
These properties contradict the maximum principle, and hence \eqref{eq:v-v_0-ineq} is proved.

Next, we consider translations of $v^j$ in time: for $\rho>0$
$$
v^j_\rho(x,t):=v^j(x,t+\rho),\quad (x,t)\in B_{R_0}\times(0,\infty).
$$
We claim that for every $\rho>0$
\begin{align}
    \label{eq:v-trans-ineq}
    v^j\le v^j_\rho\quad\text{in }B_{R_0}\times(0,\infty).
\end{align}
Notice that this assertion immediately implies the nondecreasing nature of $v^j$ in time within $B_{R_0}\times(0,\infty)$, thereby completing the proof. To establish \eqref{eq:v-trans-ineq}, we assume, for contradiction, that there exist $\rho>0$ and $T>0$ such that $A_\rho:={v^j>v^j_\rho}\cap\left((B_{R_0}\setminus K)\times(0,T)\right)\neq\emptyset$. Since $v^j_\rho\ge v^j$ on $\partial_p\left((B_{R_0}\setminus K)\times(0,T)\right)$ due to \eqref{eq:v-v_0-ineq}, it follows that $v_\rho^j=v^j$ on $\partial_pA_\rho$. Moreover, by using the Lipschitz continuity of $h_j$, we have in $A_\rho$
\begin{align*}
    \cM^+_{\la,\Lambda}(D^2(v^j-v^j_\rho))-\partial_t(v^j-v^j_\rho)&\ge (F(D^2v^j)-\partial_tv^j)-(F(D^2v^j_\rho)-\partial_tv^j_\rho)\\
    &=h_j(v^j)-h_j(v^j_\rho)\ge-c_j(v^j-v^j_\rho).
\end{align*}
Then, for
$$
w_\rho(x,t):=e^{-c_jt}(v^j-v^j_\rho)(x,t),
$$
we can argue as above to get
\begin{align*}
\begin{cases}
    w_\rho>0\,\,\,,\,\,\, \cM_{\la,\Lambda}^+(D^2w_\rho)-\partial_tw_\rho\ge 0&\text{in }A_\rho,\\
    w_\rho=0&\text{on }\partial_p A_\rho.
\end{cases}\end{align*}
This contradicts the maximum principle.

\medskip\noindent\emph{Step 3 (Support condition):}
In this step, we show $\{v_0>0\}\times[0,\infty)\subset \supp v\subset B_{R_0}\times[0,\infty)$. The first inclusion simply follows from the monotonicity of $v$ in time. For the second inclusion, it is sufficient to prove
\begin{align}
    \label{eq:v-v-inf-comp}
    v(x,t)\le v_\infty(x)\quad\text{for all }(x,t)\in\{x\in\R^n\,:\, v_\infty(x)>0\}\times(0,\infty).
\end{align}
To prove \eqref{eq:v-v-inf-comp}, let $\{v_\infty^k\}_{k\in\mathbb{N}}$ be as in \eqref{eq:v-infty-k}. We extend $v_\infty$ and $v_\infty^k$ from $\R^n$ to $\R^n\times[0,\infty)$ by 
$$
v_\infty(x,t):=v_\infty(x),\quad v_\infty^k(x,t):=v_\infty^k(x).
$$
For small $\e>0$, we write $A_{k,\e}:=\{x\in\R^n\,:\, v_\infty^k(x)>\e\}$. We claim
\begin{align}
    \label{eq:v-infty-k-claim}
    v_\infty^k>v\quad\text{in }A_{k,\e}\times(0,\infty).
\end{align}
Indeed, since $v_\infty^k>v_0=v$ on the compact set $\overline{A_{k,\e}}\times\{0\}$, we have by continuity
\begin{align}
    \label{eq:v-infty-k-v}
    v_\infty^k>v\quad\text{in }\overline{A_{k,\e}}\times[0,T_0)\quad\text{for some }T_0>0.
\end{align}
Suppose the claim \eqref{eq:v-infty-k-claim} is not true. Then, due to \eqref{eq:v-infty-k-v}, we can find a point $(x^0,t^0)\in (A_{k,\e}\setminus K)\times(0,\infty)$ such that $v_\infty^k=v>0$ at $(x^0,t^0)$ and $v_\infty^k>v$ in $A_{k,\e}\times(0,t^0)$. In particular, we have $v_\infty^k-v\ge0$ in an open set $D:=((A_{k,\e}\setminus K)\times(0,t^0))\cap \{v>0\}$. Moreover, using the monotonicity of $h$, we obtain that in $D$
\begin{align*}
    \cM_{\la,\Lambda}^-(D^2(v_\infty^k-v))-\partial_t(v_\infty^k-v)&\le (F(D^2v_\infty^k)-\partial_tv_\infty^k)-(F(D^2v)-\partial_tv)\\
    &=h(v_\infty^k)-h(v)\le0.
\end{align*}
Then, the strong minimum principle implies that $v_\infty^k=v$ in $D$. This contradicts \eqref{eq:v-infty-k-v}, and hence the claim \eqref{eq:v-infty-k-claim} is proved.

Since $\e>0$ is arbitrary, \eqref{eq:v-infty-k-claim} gives that $v_\infty^k>v$ in $\{x\in\R^n\,:\, v_\infty^k(x)>0\}\times(0,\infty)$. Taking $k\to\infty$, we conclude \eqref{eq:v-v-inf-comp}.
\end{proof}

\begin{lemma}
    \label{lem:sol-supp}
    Let $v\in C(\R^n\times(0,\infty))$ be a nonnegative function which is nondecreasing in time and satisfies \eqref{eq:sol-const}. Then $$
    \supp v\cap\{t=0\}=\supp v_0.
    $$
\end{lemma}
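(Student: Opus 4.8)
The assertion is the conjunction of $\supp v_0\subset\supp v\cap\{t=0\}$ and $\supp v\cap\{t=0\}\subset\supp v_0$; here $v$ is regarded as continuous on $\R^n\times[0,\infty)$ (it is, since $v=v_0$ on $\{t=0\}$ and $v_0\in C(\R^n)$), $\supp v=\overline{\{v>0\}}$ is closed in $\R^n\times[0,\infty)$, and $\R^n$ is identified with $\R^n\times\{0\}$. The first inclusion is elementary: $\{v>0\}\cap\{t=0\}=\{v_0>0\}$ and $\{t=0\}$ is closed, so $\supp v\cap\{t=0\}=\overline{\{v>0\}}\cap\{t=0\}\supset\overline{\{v>0\}\cap\{t=0\}}=\supp v_0$.

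For the reverse inclusion I argue by contradiction: assuming $x_0\notin\supp v_0$, I will exhibit an open neighbourhood of $(x_0,0)$ in $\R^n\times[0,\infty)$ on which $v\equiv0$, which forces $(x_0,0)\notin\supp v$. Since $K\subset\{v_0=1\}\subset\supp v_0$, one can fix $\delta>0$ small enough that $v_0\equiv0$ on $\overline{B_{2\delta}(x_0)}$, $\overline{B_{2\delta}(x_0)}\cap K=\emptyset$, and $\tfrac{\e_1^a}{2n\Lambda}\delta^2<\e_1$. The crucial observation is that near the free boundary the right-hand side of \eqref{eq:sol-const} is \emph{bounded and uniformly positive} — $h(s)=\e_1^a$ for $0<s<\e_1$ — so in that regime \eqref{eq:sol-const} is a parabolic obstacle problem with constant datum $\e_1^a$, which has finite speed of propagation (in contrast to the genuinely singular term $g$).

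Concretely, I use the stationary quadratic dead-core barrier $\psi(x):=\tfrac{\e_1^a}{2n\Lambda}(|x-x_0|-\delta)_+^2$, working in the cylinder $D:=B_{2\delta}(x_0)\times(0,\tau)$. One checks that $\psi$ is a viscosity supersolution of $F(D^2 w)-\partial_t w=h(w)$, $w\ge0$, in $D$: where $\psi>0$ the Hessian $D^2\psi$ is nonnegative with $\tr D^2\psi\le\e_1^a/\Lambda$, so $F(D^2\psi)-\partial_t\psi\le\cM^+_{\la,\Lambda}(D^2\psi)=\Lambda\tr D^2\psi\le\e_1^a=h(\psi)$ (since $0<\psi<\e_1$ in $D$); on the interior of $\{\psi=0\}$ the supersolution inequality is automatic; and at a free-boundary point $(x^*,t^*)$ of $\psi$, because $\psi$ vanishes exactly quadratically at $\{|x-x_0|=\delta\}$ any smooth $\varphi$ touching $\psi$ there from below has $D_x\varphi=0$, $\partial_t\varphi\ge0$ and $D_x^2\varphi\le\tfrac{\e_1^a}{n\Lambda}\,\nu\otimes\nu$ (with $\nu=(x^*-x_0)/|x^*-x_0|$), whence again $F(D_x^2\varphi)-\partial_t\varphi\le\e_1^a$. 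Next choose $\tau>0$ so small that $v<\tfrac{\e_1^a}{2n\Lambda}\delta^2$ on $\overline{B_{2\delta}(x_0)}\times[0,\tau]$, which is possible by continuity of $v$ and $v\equiv0$ on $\overline{B_{2\delta}(x_0)}\times\{0\}$. Then $v<\e_1$ throughout $D$, so in $\{v>0\}\cap D$ equation \eqref{eq:sol-const} reads $F(D^2v)-\partial_tv=\e_1^a$ and, together with $v\ge0$, makes $v$ a viscosity subsolution of the obstacle problem $\min\{\e_1^a-(F(D^2w)-\partial_tw),\,w\}=0$ in $D$ (trivially so on $\{v=0\}$). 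On $\partial_p D$ we have $v=v_0=0\le\psi$ on $B_{2\delta}(x_0)\times\{0\}$ and $v<\tfrac{\e_1^a}{2n\Lambda}\delta^2=\psi$ on $\partial B_{2\delta}(x_0)\times(0,\tau)$, so the comparison principle yields $v\le\psi$ in $D$. Hence $v\equiv0$ on $\{\psi=0\}\cap D=B_\delta(x_0)\times(0,\tau)$, and since also $v(\cdot,0)\equiv0$ on $B_\delta(x_0)$ we conclude $v\equiv0$ on $B_\delta(x_0)\times[0,\tau)$, the desired contradiction.

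I expect the main difficulty to be the verification that $\psi$ is a supersolution across its own free boundary, which is exactly what dictates the shape of the barrier. Naive candidates fail: a linear half-space profile $A\,((x-x_0)\cdot e-\delta)_+$, or a power $(|x-x_0|-\delta)_+^{2/(1-a)}$ with exponent $2/(1-a)<2$, do not vanish quadratically at the front, so test functions touching from below can carry arbitrarily large positive Hessian and $F(D^2\varphi)$ is not controlled by $\e_1^a$; moreover a flat half-space barrier is not positive on all of $\partial B_{2\delta}(x_0)$, which would wreck the comparison on the lateral boundary. A profile vanishing exactly quadratically on a front whose curvature is absorbed into the normalization $1/(2n\Lambda)$ is what reconciles all the requirements — and this is available only because $h$ is bounded near $\{v=0\}$.
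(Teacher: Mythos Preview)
Your proof is correct and takes a genuinely different route from the paper's.

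The paper argues the nontrivial inclusion by contradiction as well, but exploits the time-monotonicity hypothesis in an essential way: on the suspect set $A_2\times(0,t_2)$ (where $v_0=0$ but $(x,0)\in\supp v$) one has $v>0$ and $v<\e_1$, and monotonicity gives $\partial_t v\ge0$, so that at each fixed time $F(D^2v)=\partial_t v+h(v)\ge\e_1^a$. This is an \emph{elliptic} subsolution inequality, and the standard nondegeneracy for such subsolutions yields $\sup_{A_2\times\{s\}}v\ge c_0>0$ uniformly in $s$, contradicting $v(\cdot,0)=v_0=0$ on $A_2$ as $s\to0$.

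Your argument instead builds the stationary dead-core barrier $\psi(x)=\tfrac{\e_1^a}{2n\Lambda}(|x-x_0|-\delta)_+^2$ and compares in the parabolic obstacle problem $\min\{\e_1^a-(F(D^2w)-\partial_t w),\,w\}=0$, concluding $v\le\psi$ and hence $v\equiv0$ on $B_\delta(x_0)\times(0,\tau)$. Two remarks: first, your bound at the free boundary of $\psi$ is actually sharper than stated --- since $\psi\equiv0$ on the half-line $x^*-s\nu$, any $C^2$ test function touching from below in fact has $D_x^2\varphi\le0$ (not merely $\le\tfrac{\e_1^a}{n\Lambda}\nu\otimes\nu$), which makes the supersolution check immediate. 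Second, your invocation of comparison for the obstacle problem is standard but deserves a word: one clean justification is to mollify $\psi$ in space and use convexity of $F$ to get a smooth classical supersolution $\psi^\epsilon$ of $F(D^2u)-\partial_t u\le\e_1^a$, then compare $v$ with $\psi^\epsilon$ on the open set $\{v>\psi^\epsilon\}$ via the Pucci maximal operator, and let $\epsilon\to0$.

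What each approach buys: the paper's argument is short once nondegeneracy is available, but leans on the monotonicity hypothesis twice (for $v>0$ on the suspect region and for $\partial_t v\ge0$). Your barrier argument is entirely self-contained, uses only the parabolic structure and the boundedness of $h$ near $0$, and --- notably --- never invokes time-monotonicity, so it in fact proves the lemma without that hypothesis.
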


\begin{proof}
We first claim that each time section of $\supp v$ is connected. Assuming the contrary, suppose the claim is not true. Then, due to the monotonicity of $v$, we can find $t_1>0$ such that a connected component of a time section $\supp v\cap\{t=t_1\}$, say $A_1\times\{t_1\}$, satisfies $A_1\cap\supp v_0=\emptyset$. For $\mathbb{A}_1:=(\text{Int }A_1\times(0,t_1))\cap\{v>0\}$, this implies that $v=0$ on $\partial_p\mathbb A_1$. On the other hand, $v>0$ and $F(D^2v)-\partial_tv=h(v)>0$ in $\mathbb A_1$. Those violate the maximum principle.

Now, we observe that the monotonicity of $v$ gives $\supp v\cap\{t=0\}\supset \supp v_0$. To prove the reverse inclusion, we assume to the contrary that $A_2:=\text{Int }(\supp v\cap\{t=0\})\setminus \supp v_0$ is a nonempty open set in $\R^n$. Notice that $A_2$ is connected due to the claim above. By the monotonicity of $v$, we have $v>0$ in $A_2\times(0,\infty)$. Since $v=0$ on $\overline{A_2}\times\{0\}$, we can find a small constant $t_2>0$ such that $v<\e_1$ in $A_2\times(0,t_2)$, where $\e_1>0$ is as in \eqref{eq:assump-h}. Then
$$
F(D^2v)=\partial_tv+h(v)\ge \e_1^a\quad\text{in }A_2\times(0,t_2).
$$
By the nondegeneracy, there exists a constant $c_0>0$ such that
$$
\sup_{A_2\times\{s\}}v\ge c_0\quad\text{for every }0<s<t_2.
$$
Taking $s\to0$ gives $\sup_{A_2}v_0\ge c_0$, which is a contradiction since $v_0=0$ in $A_2$.  
\end{proof}

Now we prove Proposition~\ref{prop:quasiconcave-const} by making use of Lemmas~\ref{lem:quasiconcave-const-approx} and ~\ref{lem:sol-supp}.

\begin{proof}[Proof of Proposition~\ref{prop:quasiconcave-const}]
Let $v\in C(B_{R_0}\times(0,\infty))$ be a function as in Lemma~\ref{lem:quasiconcave-const-approx}. We extend $v$ to $\R^n\times(0,\infty)$ by letting $v\equiv 0$ in $(\R^n\setminus B_{R_0})\times(0,\infty)$. Then, from the support condition $\supp v\subset B_{R_0}\times(0,\infty)$ and the growth condition \eqref{eq:reg-FN-FB}, we see that $v$ satisfies \eqref{eq:sol-const}. In view of Lemma~\ref{lem:quasiconcave-const-approx}, it remains to prove the space-time quasiconcavity of $v$. To this aim, let $v^*$ be the space-time quasiconcave envelope of $v$, which, by Corollary~\ref{cor:quasi-con-subsol}, is a subsolution of
$$
F(D^2v^*)-\partial_tv^*= h(v^*)\quad\text{in }\{v^*>0\}\setminus(K\times(0,\infty)).
$$

As $v^*$ is space-time quasiconcave and $v^*\ge v$, it is sufficient to prove $v\ge v^*$. We will use the Lavrentiev Priniciple to achieve this. We assume without loss of generality that the origin $0$ is contained in $K^{\mathrm{o}}$, the interior of $K$. We define for $T>1$ and $0<\rho<1$
\begin{align*}
    &D_T:=\{(x,t)\in \R^n\times(0,T]\,:\, v(x,t)>0\},\quad D_T^*:=\{(x,t)\in \R^n\times(0,T]\,:\, v^*(x,t)>0\},\\
    &v^\rho(x,t):=v(\rho x,\rho^2t),\quad D_T^\rho:=\{(x,t)\in \R^n\times(0,T]\,:\, v^\rho(x,t)>0\},
\end{align*}
and let 
$$
E_T:=\{0<\rho<1\,:\,v^\rho\ge v^*\,\,\,\text{in }D_T^*\}.
$$
Notice that both $D_T$ and $D_T^*$ are bounded since $v$ restricted to $\R^n\times[0,T]$ has a compact support. As $v_0:\R^n\to\R$ is a nonnegative compactly supported function satisfying $F(D^2v_0)\ge h(v_0)>0$ in $\{v_0>0\}\setminus K$ and $v_0=1$ in $K$, the maximum principle gives $0\le v_0\le1$ in $\R^n$. This in turn implies by the maximum principle $0\le v\le1$ in $\R^n\times(0,\infty)$, and hence $0\le v^*\le1$ in $\R^n\times(0,\infty)$. On the other hand, the assumption $0\in K^{\mathrm{o}}$ guarantees that $D_T^*\subset(\rho^{-1}K)\times(0,T]$ for small $\rho>0$. Since $v^\rho=1$ in $(\rho^{-1}K)\times(0,T]$, we have $v^\rho\ge v^*$ in $\R^n\times(0,T]$ for such $\rho>0$, and hence $E_T\neq\emptyset$. Note that our desired inequality $v\ge v^*$ in $\R^n\times(0,\infty)$ follows once we show that $\sup E_T=1$ for every $T>1$. Now, we assume towards a contradiction that for some $T>1$, $\rho_0:=\sup E_T\in (0,1)$.

We claim that there is a point $z^0\in \overline{D_T^*}\setminus (K\times[0,T])$ such that $v^{\rho_0}(z^0)=v^*(z^0)$. To prove it by contradiction, suppose that $v^{\rho_0}>v^*$ in $\overline{D_T^*}\setminus (K\times[0,T])$. For a fixed $\tilde\rho_0\in (\rho_0,1)$, we have $v^{\rho_0}>v^*$ in a smaller compact set $\overline{D_T^*}\setminus (\tilde\rho_0^{-1}K^{\mathrm{o}}\times[0,T])$. By continuity, there exists $\rho_1\in (\rho_0,\tilde\rho_0)$ such that $v^{\rho_1}\ge v^*$ in $\overline{D_T^*}\setminus(\tilde\rho_0^{-1}K^{\mathrm{o}}\times[0,T])$. Since $v^{\rho_1}=1\ge v^*$ in $\rho_1^{-1}K\times[0,T]$ and $\rho_1^{-1}K\times[0,T]\supset \tilde\rho_0^{-1}K^{\mathrm{o}}\times[0,T]$, we have $v^{\rho_1}\ge v^*$ in $\overline{D_T^*}$, contradicting $\rho_0=\sup E_T$.

Now we consider the following three possibilities.
\begin{align*}
    &\mathbf{A}:\quad z^0\in \left(\overline{D_T^*}\setminus (K\times[0,T])\right)\cap \{t=0\},\\
    &\mathbf{B}:\quad z^0\in \left(\overline{D_T^*}\setminus (K\times[0,T])\right)\cap \{t>0\}\,\,\,\text{and}\,\,\, v^*(z^0)=v^{\rho_0}(z^0)>0 ,\\
    &\mathbf{C}: \quad z^0\in \left(\overline{D_T^*}\setminus (K\times[0,T])\right)\cap \{t>0\}\,\,\,\text{and}\,\,\, v^*(z^0)=v^{\rho_0}(z^0)=0.
\end{align*}

\noindent\emph{Case} $\mathbf{A}$. We first consider the case $z^0=(x^0,0)\in \{t=0\}$. Using the fact that $\tilde v^*(x,t):=v^*(x,t^{1/\al})$ is space-time quasiconcave in $\R^n\times(0,\infty)$, it is easy to see that $v^*(\cdot,0)$ is the quasiconcave envelope of $v(\cdot,0)$ in $\R^n$. This, along with the quasiconcavity assumption on $v_0$, implies that $v^*=v_0=v$ on $\{t=0\}$. We further split into two cases
$$
\text{either}\quad v^*(z^0)=v^{\rho_0}(z^0)=0\quad \text{or}\quad v^*(z^0)=v^{\rho_0}(z^0)>0.
$$

\medskip\noindent\emph{Case} $\mathbf{A.1}$. Suppose $v^*(x^0,0)=v^{\rho_0}(x^0,0)=0$. Note that this is equivalent to $v_0(x^0)=v_0(\rho_0x^0)=0$. Since $\supp v^*$ is the convex hull of $\supp v$, we have by Lemma~\ref{lem:sol-supp} that $\supp v^*\cap\{t=0\}=\supp v_0$. Thus, it follows that both $x^0$ and $\rho_0x^0$ are contained on $\partial\{v_0>0\}$. This is a contradiction since $\{v_0>0\}$ is a convex set containing the origin.

\medskip\noindent\emph{Case} $\mathbf{A.2}$. Next, we suppose $\al_0:=v^*(x^0,0)=v^{\rho_0}(x^0,0)>0$. Since $v_0(x^0)=v_0(\rho_0x^0)=\al_0$ and $v_0$ is quasiconcave, we have $v_0=\al_0$ on the line $l:=[\rho_0x^0,x^0]$. This, combined with the fact that both $\{v_0\ge \al_0\}$ and $\{v_0>\al_0\}$ are convex sets, implies that $\{v_0=\al_0\}=\{v_0\ge\al_0\}\setminus \{v_0>\al_0\}$ is an $n$-dimensional convex ring/shell containing a nonempty open set. This is a contradiction since $F(D^2v_0)>0$ in $\{v_0>0\}\setminus K$.

\medskip\noindent\emph{Case} $\mathbf{B}$. Suppose $z^0\in \left(\overline{D_T^*}\setminus (K\times[0,T])\right)\cap \{t>0\}$ and $v^*(z^0)=v^{\rho_0}(z^0)>0$. Then $z^0\in (D_T^*\cap D_T^{\rho_0})\setminus (K\times[0,T])$. Since $h$ is strictly positive in $(0,\infty)$, we have $h(v^*(z^0))=h(v^{\rho_0}(z^0))>\rho_0^2h(v^{\rho_0}(z^0))$. By continuity, we further have $h(v^*)>\rho_0^2h(v^{\rho_0})$ in $Q_r(z^0)\subset (D_T^*\cap D_T^{\rho_0})\setminus (K\times(0,T])$ for some small $r>0$. It follows that
$$
F(D^2v^*)-\partial_tv^*\ge h(v^*)>\rho_0^2h(v^{\rho_0})=F(D^2v^{\rho_0})-\partial_tv^{\rho_0}\quad\text{in }Q_r(z^0),
$$
which yields
$$
\cM^-_{\la,\Lambda}(D^2(v^{\rho_0}-v^*))-\partial_t(v^{\rho_0}-v^*)\le (F(D^2v^{\rho_0})-\partial_t v^{\rho_0})-(F(D^2v^*)-\partial_tv^*)<0\quad\text{in }Q_r(z^0).
$$
On the other hand, we have $v^{\rho_0}-v^*\ge0$ in $Q_r(z^0)$ by the definition of $\rho_0$, and thus $v^{\rho_0}-v^*$ attains a local minimum at $z^0$. Therefore, by the strong minimum principle, we get $v^{\rho_0}-v^*\equiv0$ in $Q_r(z^0)$. This contradicts the strict inequality $\cM^-_{\la,\Lambda}(D^2(v^{\rho_0}-v^*))-\partial_t(v^{\rho_0}-v^*)<0$.

\medskip\noindent\emph{Case} $\mathbf{C}$. Next, we consider the case when $z^0=(x^0,t^0)\in \left(\overline{D_T^*}\setminus (K\times[0,T])\right)\cap\{t>0\}$ and $v^*(z^0)=v^{\rho_0}(z^0)=0$. Notice that $z^0\in \partial\{v^*>0\}\cap\partial\{v^{\rho_0}>0\}\cap \{t>0\}$. For $\e_1>0$ as in \eqref{eq:assump-h}, we can find by continuity small $r>0$ such that $0<v^*\le v^{\rho_0}\le \e_1$ in $Q_r(z^0)\cap D_T^*$. Then $h(v^*)=h(v^{\rho_0})=\e_1^a$ in $Q_r(z^0)\cap D^*_T$, thus we can proceed as in Case $\mathbf{B}$ to obtain 
\begin{align}\label{eq:v-v^*-supersol}
\cM^-_{\la,\Lambda}(D^2(v^{\rho_0}-v^*))-\partial_t(v^{\rho_0}-v^*)<0\quad\text{in }Q_r(z^0)\cap D_T^*.\end{align}
This implies $v^{\rho_0}>v^*$ in $Q_r(z^0)\cap D_T^*$, otherwise $v^{\rho_0}-v^*$ has a local minimum and we can argue as in Case $\mathbf{B}$ to get a contradiction.

We claim that there exist a spatial vector $\mu\in\R^n$ and a constant $\tau\ge0$ such that for any small $\eta>0$, we can construct a spatial cone $\cC^{x^0}_{\eta,\mu}:=\left\{x\in\R^n\,:\, \frac{x-x^0}{|x-x^0|}\cdot \frac{\mu}{|\mu|}>\eta \right\}$ and an oblique cylinder 
$$
\mathcal{E}_{\eta,\mu,\tau}^{z^0}:=\{(x,t)\in \R^{n+1}\,:\, t\in(t^0-1,t^0],\,x\in \cC_{\eta,\mu}^{x^0}+\tau(t^0-t)\mu\}
$$
satisfying
\begin{align}\label{eq:claim-cone}
\mathcal{E}_{\eta,\mu,\tau}^{z^0}\cap Q_r(z^0)\subset D_T^*\quad\text{for a small constant $r>0$}.
\end{align}
Suppose now the claim is true. 
We let $w$ be a solution of 
\begin{align*}
\begin{cases}
    \cM^-_{\la,\Lambda}(D^2w)-\partial_tw=0&\text{in }\cE^{z^0}_{\eta,\mu,\tau}\cap Q_r(z^0),\\
    w=0&\text{on }\partial_p \cE^{z^0}_{\eta,\mu,\tau}\cap Q_r(z^0),\\
    w=v^{\rho_0}-v^*&\text{on }\cE^{z^0}_{\eta,\mu,\tau}\cap \partial_pQ_r(z^0).
\end{cases}
\end{align*}
By using \eqref{eq:v-v^*-supersol} and that $v^{\rho_0}>v^*$ in $Q_r(z^0)\cap D^*_T$, we can apply the comparison principle to deduce $w\le v^{\rho_0}-v^*$ in $\cE^{z^0}_{\eta,\mu,\tau}\cap Q_r(z^0)$. Thus, by taking $\eta>0$ small enough, we have by Lemma~\ref{lem:par-pucci-sol-nondeg-2} in Appendix~\ref{sec:appen-nondeg} that for $s>0$ small
\begin{align}
    \label{eq:nondeg}
    \sup_{(B_s(x^0)\times\{t^0\})\cap \cE^{z^0}_{\eta,\mu,\tau}}(v^{\rho_0}-v^*)\ge \sup_{(B_s(x^0)\times\{t^0\})\cap \cE^{z^0}_{\eta,\mu,\tau}}w\ge cs^{3/2}.
\end{align}
On the other hand, we notice that $v^{\rho_0}$ solves $F(D^2v^{\rho_0})-\partial_tv^{\rho_0}=\rho_0^2h(v^{\rho_0})$ in $Q_r(z^0)\cap D_T^{\rho_0}$. Since $h=\e_1^a$ on $(0,\e_1)$ and $v^{\rho_0}(z^0)=0$, $v^{\rho_0}$ belongs to $C^{1,1}_x\cap C^{0,1}_t$ inside $Q_s(z^0)\cap D_T^{\rho_0}$ for small $s>0$, see e.g. \cite{FigSha15}. Thus, $\sup_{(B_s(x^0)\times\{t^0\})\cap \cE^{z^0}_{\eta,\mu,\tau})}v^{\rho_0}\le cs^2$ for small $s>0$, which contradicts \eqref{eq:nondeg}. 

To close the argument, we need to prove the claim \eqref{eq:claim-cone}. By the Caratheodory's theorem (see also \cite{BiaLonSal09, ColSal03}), $z^0$ can be written as a (space-time) convex combination of points $z^1,\cdots, z^k$, with $k\le n+1$, such that $z^i\in \partial\{v>0\}$, $1\le i\le k$, and there exists a hyperplane $\Pi$ in $\R^{n+1}$ supporting $\partial\{v^*>0\}$ at $z^0$ and $\partial\{v>0\}$ at $z^i$, $1\le i\le k$. Since $v^*=1$ on $\{\mathbf{0}\}\times(0,\infty)$ and $z^0\in \partial\{v^*>0\}$ with $x^0\neq\mathbf{0}$ and $t^0>0$, we have by the space-time quasiconcavity of $v^*$
$$
v^*>0\quad\text{in }\left\{(s_1 x^0,s_2 t^0)\in\R^n\times(0,\infty)\,:\, s_2>0,\, 0<s_1<\min\{s_2,1\} \right\}
$$
This implies that if $\nu=(\nu_{x^0},\nu_{t^0})$ is the unit normal to $\Pi$ pointing towards $\{v^*>0\}$ at $z^0$ and $\partial\{v>0\}$ at $z^i$, then $\nu$ cannot be parallel to $(\mathbf{0},1)$, i.e., $\nu_{x^0}\neq\mathbf{0}$. 

Next, we observe that from \eqref{eq:assump-h} and \eqref{eq:sol-const}, $v$ can be seen as a nonnegative solution of
$$
F(D^2v)-\partial_tv=\e_1^a\chi_{\{v>0\}}\quad\text{near each free boundary point on }\partial\{v>0\}.
$$
Thus $\partial\{v>0\}\cap \{t>0\}$ is locally $C^1$ in space-time near points satisfying a thickness condition on $\{v=0\}$, see e.g., \cite{FigSha15}. As $z^0\in\{t>0\}$, at least one of $z^i$'s are contained in $\{t>0\}$, say $z^1$. It then follows that $\partial\{v>0\}$ is $C^1$ in space-time near $z^1$. This allows us to construct an $n$-dimensional spatial cone $\cC_{\eta,\nu_{x^0}}^{x^1}$ such that for large $\tau_0>0$ and small $r_1>0$,
\begin{align}\label{eq:cylinder}
\cE_{\eta,\nu_{x^0},\tau_0}^{z^1}\cap Q_{r_1}(z^1)\subset \{v>0\}\subset \{v^*>0\}.
\end{align}
Here, $\tau_0$ can be chosen independently of $\eta$, as the requirement for $\tau_0$ is that the "slope" of the oblique cylinder $\cE_{\eta,\nu_{x^0},\tau_0}^{z^1}$ is less than that of the hyperplane $\Pi$. This condition can be expressed analytically as 
$$
\frac1{\tau_0|\nu_{x^0}|}<\frac{|\nu_{x^0}|}{|\nu_{t^0}|}.
$$
Recall we have showed $\nu_{x^0}\neq\mathbf{0}$.

Now, since $\{v^*>0\}$ is a convex set and $z^0$ is a convex combination of $\{z^i\}^k_{i=1}$, \eqref{eq:cylinder} implies \eqref{eq:claim-cone}.
\end{proof}

We are now ready to prove our main result, Theorem~\ref{thm:main-exterior}.

\begin{proof}[Proof of Theorem~\ref{thm:main-exterior}]
For large $i\in\mathbb{N}$, let $g^i:\R\to\R$ be a function defined by
\begin{align*}
    g^i(s):=\begin{cases}
    0,&-\infty<s\le0,\\
    (1/2)^{ia},&0<s<(1/2)^i,\\
    s^a,&s\ge(1/2)^i.
    \end{cases}
\end{align*}
Note that $g^i\nearrow g$ as $i\to\infty$ and $F(D^2u_0)\ge g(u_0)\ge g^i(u_0)$. Moreover, we consider a sequence of nonnegative compactly supported functions $u^i_\infty$ satisfying \eqref{eq:v-infty} with $g^i$ and $u_0$ in the place of $h$ and $v_0$, respectively. Since $0\le u_\infty^i\le1$ by the maximum principle, we have
$$
F(D^2u_\infty^i)=g^i(u_\infty^i)\ge1\quad\text{in } \{u_\infty^i>0\}.
$$
Thus, the non-degeneracy gives that $\supp u_\infty^i\subset B_{R_0}$ for some $R_0>0$, independent of $i$.

Now, thanks to Proposition~\ref{prop:quasiconcave-const}, for each $i$ there exists a nonnegative space-time quasiconcave function $u^i$ which satisfies the regularized problem 
\begin{align*}
    \begin{cases}
    F(D^2u^i)-\partial_tu^i=g^i(u^i)&\text{in } (\R^n\setminus K)\times(0,\infty),\\
    u^i=u_0&\text{on }\{t=0\},\\
    |\D u|=u=0&\text{on }\partial\Omega_i\cap(\R^n\times(0,\infty)),\\
    u^i=1&\text{in } K\times[0,\infty),
\end{cases}\end{align*}
where $\Omega_i=\{u^i>0\}$. Moreover, each $u^i$ is monotone in time and satisfies \eqref{eq:reg-FN-FB} and $\{u_0>0\}\times(0,\infty)\subset \Omega_i\subset B_{R_0}\times(0,\infty)$. Since $g^i\ge0$ and $u_0$ is bounded above by $1$, we have by the maximum principle that $0\le u^i\le1$ in $(\R^n\setminus K)\times(0,\infty)$. By the Arzelà-Ascoli Theorem, there exists a nonnegative continuous function $u$ in $(\R^n\setminus K)\times(0,\infty)$ such that over a subsequence $u^i\to u$ uniformly on compact subsets of $(\R^n\setminus K)\times(0,\infty)$. From the convergence $g^i\to g$ in every compact subset of $(0,\infty)$, we infer that $u$ is a space-time quasiconcave function which is nondecreasing in time and satisfies $F(D^2u)-\partial_tu=g(u)$ in $(\R^n\setminus K)\times(0,\infty)$. Clearly, $\{u>0\}\subset B_{R_0}\times(0,\infty)$. In addition, for $v^\sharp$ as in \eqref{eq:supersol} with the initial value $v_0$ replaced by $u_0$, we have $u^i\le v^\sharp$; see Step 1 in the proof of Lemma~\ref{lem:quasiconcave-const-approx}. This, along with the monotonicity of $u^i$, yields $u_0(x)\le u^i(x,t)\le v^\sharp(x,t)$ for all $(x,t)\in B_{R_0}\times(0,\infty)$. Taking $i\to\infty$ and letting $u\equiv1$ in $K^{\mathrm{o}}\times(0,\infty)$, we get $u=1$ in $K\times[0,\infty)$, $u=u_0$ on $\{t=0\}$ and $\{u_0>0\}\times(0,\infty)\subset\{u>0\}$. Finally, the growth condition \eqref{eq:reg-FN-FB} for $u^i$ and the uniform convergence $u^i\to u$ give $|\D u|=u=0$ on $\partial\Omega\cap(\R^n\times(0,\infty))$. This completes the proof.    
\end{proof}


\appendix

\section{Nondegeneracy}\label{sec:appen-nondeg}
In this section, we derive the nondegeneracy result for the parabolic equation involving Pucci operator, following the argument for the elliptic counterpart in \cite{AllKriSha23}.

We use the same notation $D_{L,R}$ as in \cite{AllKriSha23} for Lipschitz domains in $\R^n$: $$
D_{L,R}:=\{(x',x_n)\in B_R\,:\, x_n>f(x')\},
$$
where $f$ is a Lipschitz function with constant at most $L$ and satisfies $f(0)=0$.\\
For such $D_{L,R}$ and a constant $\tau\ge0$, we define $$
E_{L,R,\tau}:=\{(x,t)\in\R^{n+1}\,:\,t\in(-1,0],\,x\in D_{L,R}-\tau te_n\}.
$$

\begin{lemma}
\label{lem:par-pucci-sol-nondeg-1}
For any $\tau\ge0$, there exist small constants $\eta>0$, $\ka_*>0$ and $c_*>0$, depending only on $n,\la,\Lambda,\tau$, such that if $u$ satisfies
\begin{align}
    \label{eq:nondeg-cond}
    \begin{cases}
        (a)\, \cM^-_{\la,\Lambda}(D^2u)-\p_tu=0\quad\text{in }E_{L,1,\tau},\\
        (b)\,u\ge0\quad\text{in }E_{L,1,\tau},\\
        (c)\, u(e_n/2,-\ka_*)\ge1,\\
        (d)\,L\le\eta,
    \end{cases}
\end{align}
then $u(x,t)\ge c_*x_n$ whenever $(x,t)\in E_{L,1/16,\tau}$, $x_n\ge 2\eta$ and $-(64\eta)^2\le t\le0$.
\end{lemma}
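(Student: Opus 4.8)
The plan is to reduce the statement to an interior Harnack estimate together with a comparison against one fixed caloric profile, in the spirit of the elliptic scheme of \cite{AllKriSha23}. First I would remove the obliqueness: setting $y=x+\tau te_n$ and $\tilde u(y,t):=u(y-\tau te_n,t)$ transforms $E_{L,R,\tau}$ into the straight cylinder $D_{L,R}\times(-1,0]$ and \eqref{eq:nondeg-cond}(a) into the uniformly parabolic fully nonlinear equation with bounded drift
$$
\cM^-_{\la,\Lambda}(D^2\tilde u)-\tau\p_{y_n}\tilde u-\p_t\tilde u=0\qquad\text{in }D_{L,1}\times(-1,0],
$$
which inherits the Krylov--Safonov parabolic Harnack inequality and the comparison principle with constants depending only on $n,\la,\Lambda,\tau$; nonnegativity of $\tilde u$ and the one-point bound $\tilde u(\bar y_0,-\ka_*)\ge1$ with $\bar y_0:=(0',\tfrac12-\tau\ka_*)$ carry over from \eqref{eq:nondeg-cond}(b),(c). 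I would fix once and for all small $h_0=h_0(n,\la,\Lambda,\tau)>0$ and $\ka_*=\ka_*(n,\la,\Lambda,\tau)\in(0,\tfrac1{4(\tau+1)})$, and choose $\eta$ small at the very end (so that $\eta<h_0$, $(64\eta)^2<\ka_*/8$, and the various perturbative inequalities below hold).

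Next, the Harnack step: since $(\bar y_0)_n\in(\tfrac14,\tfrac12)$, I would join $(\bar y_0,-\ka_*)$ to an arbitrary point of $\{|y|\le1,\ y_n\ge h_0\}\times(-\ka_*/2,0]$ by a chain of $N=N(n,\tau,\ka_*)$ parabolic cylinders contained in $D_{L,1}$ — this is the one place where $L=\eta$ small is used, to keep the chain away from the graph $\{y_n=f(y')\}$ — and iterate the Harnack inequality to obtain $c_1=c_1(n,\la,\Lambda,\tau)>0$ with $\tilde u\ge c_1$ on $\{|y|\le1,\ y_n\ge h_0\}\times(-\ka_*/2,0]$. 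Then I would introduce $\Phi$, the (Perron/viscosity) solution of the same drift equation in $D_{L,1}\times(-\ka_*/2,0]$ with parabolic boundary data $\Phi=1$ on $\{|y|=1,\ y_n\ge h_0\}\times(-\ka_*/2,0]$ and $\Phi=0$ on the rest of $\partial_p$ (the graph part of the lateral boundary, the low part $\{|y|=1,\ f(y')<y_n<h_0\}$, and the bottom $\{t=-\ka_*/2\}$). Then $0\le\Phi\le1$, $\Phi>0$ inside by the strong maximum principle, and by positive homogeneity of $\cM^-_{\la,\Lambda}$ the function $c_1\Phi$ solves the same equation; comparing on $\partial_p$ using $\tilde u\ge c_1$ on the far-lateral part and $\tilde u\ge0$ elsewhere gives $\tilde u\ge c_1\Phi$ in $D_{L,1}\times(-\ka_*/2,0]$.

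The crux is the linear lower growth of $\Phi$ at the wall. Near the portion $\{y_n=f(y'),\ |y'|<\tfrac12\}\times(-\ka_*/4,0]$ of the lateral boundary, $\Phi$ vanishes continuously and stays away from the jump set of its data, so I would invoke the parabolic boundary Harnack/Hopf estimate for uniformly parabolic fully nonlinear equations at a small-Lipschitz boundary — equivalently, stability of $\Phi$ as $L\to0$ toward the half-ball profile, combined with flat-boundary $C^1$ regularity — to get $c_2=c_2(n,\la,\Lambda,\tau)>0$ with
$$
\Phi(y,t)\ge c_2\big(y_n-f(y')\big)\ge c_2(y_n-\eta)\qquad\text{for }|y'|<\tfrac1{16},\ 0<y_n<\tfrac1{16},\ -\ka_*/8\le t\le0,
$$
provided $\eta$ is small. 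I expect this to be the main obstacle: one needs the constant $c_2$ to be independent of $L$ (which is precisely what forces the hypothesis $L\le\eta$), and the estimate must hold uniformly down to the tiny time $-(64\eta)^2$ — the latter is harmless since that time lies well inside $(-\ka_*/2,0]$, far from the bottom where $\Phi$ is switched off, so no new degeneracy appears there.

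Finally, combining the two displays, $\tilde u(y,t)\ge c_1c_2(y_n-\eta)$ on that region, and translating back: if $(x,t)\in E_{L,1/16,\tau}$ with $x_n\ge2\eta$ and $-(64\eta)^2\le t\le0$, then $y:=x+\tau te_n\in D_{L,1/16}$ with $y_n=x_n+\tau t$, and for $\eta$ small one has $y_n\ge\tfrac32\eta$, $y_n\ge\tfrac12 x_n$, hence $y_n-\eta\ge\tfrac14 y_n$; therefore $u(x,t)=\tilde u(y,t)\ge c_1c_2(y_n-\eta)\ge\tfrac{c_1c_2}{8}x_n$, so $c_*:=\tfrac{c_1c_2}{8}$ works. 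Everything except the boundary step is routine for the uniformly parabolic operator with bounded drift; if one prefers to avoid quoting a boundary Harnack, the same conclusion for $\Phi$ can be reached by an explicit sub-barrier in $\{|y|<\tfrac12,\ y_n>f(y')\}\times(-\ka_*/4,0]$, linear in $y_n$ with a controlled convex correction absorbing the drift, exploiting $f(y')\le L|y'|\le\eta|y'|$.
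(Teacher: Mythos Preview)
Your shear $y=x+\tau te_n$ and the ensuing Harnack chain/comparison with a model solution $\Phi$ are fine, and the overall architecture is a genuinely different route from the paper's. The paper never straightens the obliqueness; it works directly in $E_{L,1,\tau}$ and constructs two explicit time-dependent radial barriers $c\big((1+\alpha t)^q|x-y|^{-q}-r^{-q}\big)$: the first centered at $e_n/2$ spreads the one-point Harnack bound to an annulus, and the second family, centered at each $y=(y',1/4)$ with $|y'|\le\frac1{16}$, pushes positivity down toward the graph; the linear lower bound $u\ge c_*x_n$ then drops out of a direct estimate on this second barrier. This is self-contained and makes visible that $c_*$ is independent of $\eta$, which is essential for the iteration in Lemma~\ref{lem:par-pucci-sol-nondeg-2}.

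The gap in your sketch is exactly the step you flag: the linear lower growth of $\Phi$ at the graph with constant $c_2$ independent of $L\le\eta$. That bound \emph{is} the content of the lemma; invoking ``boundary Harnack/Hopf'' or ``stability to the half-ball profile'' is effectively circular here, or at best a citation of a parallel result whose proof would itself be a barrier argument of the kind the paper carries out. Concretely, the naive stability route only gives $\Phi\ge\Phi_0-CL\ge c_2y_n-C\eta$, which does not yield $c\,y_n$ at the scale $y_n\sim2\eta$ unless the stability constant happens to beat the Hopf constant. Your fallback ``linear in $y_n$ with a convex correction absorbing the drift'' does not work as stated either: after your transformation the equation is $\cM^-_{\la,\Lambda}(D^2\tilde u)-\tau\partial_{y_n}\tilde u-\partial_t\tilde u=0$, so $y_n$ (and any small $C^2$-perturbation of it) is a \emph{super}solution when $\tau>0$, not a subsolution; to make it a subsolution you must add large convexity, but then the barrier becomes positive on the graph and the comparison fails there. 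Even the solution $y_n-\tau t$ (i.e.\ $x_n$ in the original variables) is positive on the lateral graph for $t<0$, so it cannot be slid under $u$. In short, to close this step you are forced back to barriers that vanish on an interior surface rather than on the graph, which is precisely what the paper's radial construction provides.
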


\begin{proof}
We divide the proof of this lemma into a few of steps.

\medskip\noindent \emph{Step 1.} We define a spatial cone $\cC:=\{x\in(x',x_n)\in B_1\,:\, x_n>\frac18|x'|\}$ and an oblique cylinder $\cE_\tau:=\{(x,t)\in\R^{n+1}\,:\, t\in(-1,0],\, x\in \cC-\tau te_n\}$, and notice that $\cE_\tau\subset E_{L,1,\tau}$ whenever $L\le\eta\le 1/{16}$. We take $0<\ka<1/8$ small, depending only on $\tau$, such that 
$$
B_{3/8}(e_n/2)\times[-\ka,0]\subset \cE_\tau.
$$
By Harnack inequality (Theorem 2.4.32 in \cite{ImbSil13}), we have for some constant $c_1=c_1(n,\la,\Lambda)$
\begin{align*}
\inf_{Q_\ka(e_n/2,0)}u\ge c_1\sup\left\{u(x,t)\,:\,|x-e_n/2|<\frac\ka{2\sqrt2},\,\,-\ka+\frac38\ka^2<t<-\ka+\frac12\ka^2\right\}
\end{align*}
By taking $\ka_*:=\ka-\frac12\ka^2$ and using $(c)$ in \eqref{eq:nondeg-cond}, we further have
$$
\inf_{Q_\ka(e_n/2,0)}u\ge c_1.
$$
We let $\phi(x):=|x|^{-q}$ for some large constant $q>0$ to be determined later, and consider a function
$$
h(x,t):=c_1\frac{\left(1+\frac{1-8\ka/3}{\ka^2}t\right)^q\phi(x-e_n/2)-(3/8)^{-q}}{\ka^{-q}-(3/8)^{-q}}
$$
and a set $$
A:=\{(x,t)\in Q_1\,:\, -\ka^2\le t\le0,\,\, 0\le h(x,t)\le c_1\}.
$$
Here, the function $h$ and the set $A$ are constructed so that \begin{align}
    \label{eq:u-h-bdry-cond}
    A\Subset E_{L,1,\tau}\quad\text{and}\quad u\ge h\text{ on }\p_pA.
\end{align}
Indeed, one can easily see that $$
A=\left\{(x,t)\,:\,-\ka^2\le t\le0,\,\,\ka\left(1+\frac{1-\frac{8\ka}3}{\ka^2}t\right)\le|x-e_n/2|\le3/8\left(1+\frac{1-\frac{8\ka}3}{\ka^2}t\right)\right\}.
$$
Thus, each time slice of $A$ at time $t$, $-\ka^2\le t\le0$, is an annulus centered at $e_n/2$, and the annuli shrink as time $t$ decreases. In particular, \begin{align*}
    &A\cap\{t=0\}=\left(B_{3/8}(e_n/2)\setminus B_\ka(e_n/2)\right)\times\{0\},\\
    &A\cap\{t=-\ka^2\}=\left(B_{\ka}(e_n/2)\setminus B_{8\ka^2/3}(e_n/2)\right)\times\{-\ka^2\}.
\end{align*}
This readily gives
$$
A\subset B_{3/8}(e_n/2)\times[-\ka^2,0]\Subset \cE_\tau\subset E_{L,1,\tau}.
$$
In addition, the lateral boundary $\p_p A\cap\{-\ka^2<t\le0\}$ consists of the following two surfaces: \begin{align*}
    &\text{the outer surface }S_1=\{(x,t)\in Q_1\,:\, -\ka^2<t\le0,\,\, h(x,t)=0\},\\
    &\text{the inner surface }S_2=\{(x,t)\in Q_1\,:\, -\ka^2<t\le0,\,\, h(x,t)=c_1\}.
\end{align*}
Notice that both $S_1$ and $S_2$ are lateral boundaries of conical frustums. The parabolic boundary $\partial_pA$ is composed of $S_1$, $S_2$ and the bottom $\partial_pA\cap\{t=-\ka^2\}$. On $S_1$, we simply have $h=0\le u$. Regarding $S_2$ and $\partial_pA\cap \{t=-\ka^2\}$, as they are contained in $\overline{Q_\ka(e_n/2,0)}$ and $h\le c_1$ in $A$, we have $h\le c_1\le u$ on $S_2\cup(\partial_pA\cap\{t=-\ka^2\})$. Hence, \eqref{eq:u-h-bdry-cond} holds.\\
\indent Next, we show that $h$ is a subsolution in $A$. For this purpose, we compute that in $B_1\setminus\{0\}$
$$
D^2\phi(x)=q|x|^{-q-2}\left(\frac{q+2}{|x|^2}x\otimes x-I_n\right),
$$
and thus \begin{align*}
    \cM_{\la,\Lambda}^-(D^2\phi)&\ge q|x|^{-q-2}\left(\frac{q+2}{|x|^2}\cM_{\la,\Lambda}^-(x\otimes x)+\cM_{\la,\Lambda}^-(-I_n)\right)\\
    &=q|x|^{-q-2}\left(\la(q+2)-\Lambda n\right).
\end{align*}
It follows that in $A$
\begin{align*}
    &\cM^-_{\la,\Lambda}(D^2h)-\p_th\\
    &\qquad=c_1\frac{\left(1+\frac{1-8\ka/3}{\ka^2}t\right)^q}{\ka^{-q}-(3/8)^{-q}}\cM^-_{\la,\Lambda}(D^2\phi(x-e_n/2))-\p_th\\
    &\qquad\ge c_1\frac{\left(1+\frac{1-8\ka/3}{\ka^2}t\right)^q}{\ka^{-q}-(3/8)^{-q}}q|x-e_n/2|^{-q-2}\left(\la(q+2)-\Lambda n\right)\\
    &\qquad\qquad-c_1\frac{\phi(x-e_n/2)}{\ka^{-q}-(3/8)^{-q}}q\left(1+\frac{1-8\ka/3}{\ka^2}t\right)^{q-1}\left(\frac{1-8\ka/3}{\ka^2}\right)\\
    &\qquad=\frac{c_1q|x-e_n/2|^{-q-2}}{\ka^{-q}-(3/8)^{-q}}\left(1+\frac{1-8\ka/3}{\ka^2}t\right)^{q-1}\times\\
    &\qquad\qquad \times\left[\left(1+\frac{1-8\ka/3}{\ka^2}t\right)\left(\la(q+2)-\Lambda n\right)-|x-e_n/2|^2\left(\frac{1-8\ka/3}{\ka^2}\right)\right].
\end{align*}
Here, if $q$ is large, then we have \begin{align*}
    &\left(1+\frac{1-8\ka/3}{\ka^2}t\right)\left(\la(q+2)-\Lambda n\right)-|x-e_n/2|^2\left(\frac{1-8\ka/3}{\ka^2}\right)\\
    &\qquad\ge \left(\frac{8\ka}3\right)\left(\la(q+2)-\Lambda n\right)-\frac{(3/8)^2}{\ka^2}>0,
\end{align*}
thus \begin{align*}
    \cM^-_{\la,\Lambda}(D^2h)-\p_th\ge 0=\cM^-_{\la,\Lambda}(D^2u)-\p_tu\quad\text{in }A.
\end{align*}
With this and \eqref{eq:u-h-bdry-cond} at hand, we can apply the comparison principle and get $$
u\ge h\quad\text{in }A.
$$

\medskip\noindent \emph{Step 2.} 
Let 
$$
c_0:=c_1\frac{(11/{32})^{-q}-(3/8)^{-q}}{\ka^{-q}-(3/8)^{-q}}\in (0,c_1).
$$
Then $A_0:=\{(x,t)\in Q_1\,:\,-\ka^2\le t\le 0,\, c_0<h(x,t)<c_1\}$ satisfies the following:
\begin{align*}
    A_0\subset A,\quad u\ge h>c_0 \text{ in }A_0,\quad A_0\cap\{t=0\}=\left(B_{11/32}(e_n/2)\setminus B_\ka(e_n/2)\right)\times\{t=0\}.
\end{align*}
Fix a point $(y,0)=(y',y_n,0)$ with $|y'|\le1/16$ and $y_n=1/4$. For $r:=1/4-\eta$, we define $$h_y(x,t):=c_0\frac{\left(1+\frac{1-\tilde\ka/r}{\tilde\ka^2}t\right)^{\tilde q}\tilde\phi(x-y)-r^{-\tilde q}}{\tilde\ka^{-\tilde q}-r^{-\tilde{q}}}
$$
in $$
A_y:=\{(x,t)\in Q_1\,:\, -\tilde\ka^2\le t\le 0,\,\,0<h_y(x,t)<c_0\}.
$$
Here, $\tilde\ka$ and $\tilde q$ are small constants, depending only on $n,\la,\Lambda,\tau$, to be determined later, and $\tilde\phi(x):=|x|^{-\tilde q}$. Then, \begin{align*}
    A_y&\subset\{(x,t)\in Q_1\,:\,-\tilde\ka^2\le t\le 0,\,\,0<h_y(x,t)\}\\
    &=\{(x,t)\in Q_1\,:\,-\tilde\ka^2\le t\le 0,\,\,|x-y|<r+\frac{r-\tilde\ka}{\tilde\ka^2}t\}\\
    &\subset \{(x,t)\in Q_1\,:\,-\tilde\ka^2\le t\le 0,\,\,|x-y|<r+\frac{t}{\tilde\ka}\}.
\end{align*}
This, along with the fact that $r=1/4-\eta<|y|$, implies that $A_y\Subset E_{L,1,\tau}$ for small $\tilde\ka>0$. Moreover, as before, the lateral boundary $\p_p A_y\cap\{-\tilde\ka^2<t\le0\}$ consists of two conical cylinder-shaped surfaces. On the outer surface, we simply have $h_y=0\le u$. Concerning the inner  surface $S_y:=\{(x,t)\in Q_1\,:\, -\tilde\ka^2\le t\le 0,\, h_y(x,t)=c_0\}$, we observe that
$$
S_y\cap\{t=0\}=B_{\tilde\ka}(y)\times\{0\}\subset\left(B_{11/32}(e_n/2)\setminus B_\ka(e_n/2)\right)\times\{0\}=A_0\cap \{t=0\},
$$
where the inclusion holds whenever $\ka+\tilde \ka<1/8$. Thus, if $\tilde\ka$ is small enough, then we have $S_y\Subset A_0$, and thus $h_y=c_0\le u$ in $S_y$. Similarly, from $\partial_pA_y\cap\{t=-\tilde\ka^2\}=\left(B_{\tilde\ka}(y)\setminus B_{\tilde\ka^2/r}(y)\right)\times\{-\tilde\ka^2\}$ and $B_{\tilde\ka}(y)\subset B_{11/32}(e_n/2)\setminus B_\ka(e_n/2)$, we infer that $\partial_pA_y\cap\{t=-\tilde\ka^2\}\subset A_0$ for $\tilde\ka$ small enough, and hence $h_y\le c_0\le u$ on $\partial_pA_y\cap\{t=-\tilde\ka^2\}$. Therefore, we conclude that
 $$
h_y\le u\quad\text{on }\p_pA_y.
$$
In addition, we can compute (as we have done for $\cM_{\la,\Lambda}^-(D^2h)-\p_th$) \begin{align*}
    \cM_{\la,\Lambda}^-(D^2h_y)-\p_th_y&\ge \frac{c_0\tilde q|x-y|^{-\tilde q-2}}{\tilde\ka^{-\tilde q}-r^{-\tilde q}}\left(1+\frac{1-\tilde\ka/r}{\tilde\ka^2}t\right)^{q-1}\times\\
    &\qquad\times\left[\left(1+\frac{1-\tilde\ka/r}{\tilde\ka^2}t\right)\left(\la(\tilde q+2)-\Lambda n\right)-|x-y|^2\frac{1-\tilde\ka/r}{\tilde\ka^2}\right]
\end{align*}
in $A_y$. Using that $r=1/4-\eta\in (1/8,1/4)$, we can obtain that $\cM_{\la,\Lambda}^-(D^2h_y)-\p_th_y\ge 0$ in $A_y$ if $\tilde q=\tilde q(n,\la,\Lambda,\tilde\ka)$ is large enough. Thus, by the comparison principle, $$
u\ge h_y\quad\text{in }A_y.
$$

\medskip\noindent \emph{Step 3.} Now, we are ready to prove Lemma~\ref{lem:par-pucci-sol-nondeg-1}. For $y$ as above (i.e., $|y'|\le 1/16$ and $y_n=1/4$), let $(x,t)$ be a point satisfying $x'=y'$, $2\eta<x_n<1/4-\tilde\ka$ and $-(64\eta)^2\le t\le0$. Then $(x,t)\in A_y$ for small $\eta>0$. Thus \begin{align*}
    u(x,t)&\ge h_y(x,t)\\
    &=\frac{c_0}{\tilde\ka^{-\tilde q}-r^{-\tilde q}}\left[\left(1+\frac{1-\tilde\ka/r}{\tilde\ka^2}t\right)^{\tilde q}|x-y|^{-\tilde q}-r^{-\tilde q}\right]\\
    &\ge c_0\tilde\ka^{\tilde q}\left[\left(1+\frac{1-\tilde\ka/r}{\tilde\ka^2}t\right)^{\tilde q}|x-y|^{-\tilde q}-r^{-\tilde q}\right]\\
    &= c_0\tilde\ka^{\tilde q}\left(|x-y|^{-\tilde q}-r^{-\tilde q}\right)-c_0\tilde\ka^{\tilde q}\left(1-\left(1+\frac{1-\tilde\ka/r}{\tilde\ka^2}t\right)^{\tilde q}\right)|x-y|^{-\tilde q}\\
    &=:I-II.
\end{align*}
Using $x_n\ge2\eta$, we get 
\begin{align*}
    I&=c_0\tilde\ka^{\tilde q}\left((1/4-x_n)^{-\tilde q}-(1/4-\eta)^{-\tilde q}\right)\ge c(n,\la,\Lambda,\tau)(x_n-\eta)\ge c_2x_n
\end{align*}
for some $c_2=c_2(n,\la,\Lambda,\tau)>0$. On the other hand, to bound $II$, we observe that for $\eta$ small
$$
1-\left(1+\frac{1-\tilde\ka/r}{\tilde\ka^2}t\right)^{\tilde q}\le 1-\left(1+\frac{1-\tilde\ka/r}{\tilde\ka^2}(64\eta)^2\right)^{\tilde q}\le C\eta^2\le C\eta x_n.
$$
From $|x'|=|y'|$ and $x_n<1/4-\tilde\ka$, we also have
$$
|x-y|^{-\tilde q}=|1/4-x_n|^{-\tilde q}\le \tilde\ka^{-\tilde q}\le C(n,\la,\Lambda,\tau).
$$
Thus, $$
II\le C\eta x_n\le c_2/2\,x_n,
$$
where the last inequality holds if we take $\eta$ small enough. Therefore, we conclude by taking $c_*:=c_2/2$
$$
u(x,t)\ge I-II\ge c_*x_n.
$$
This completes the proof.
\end{proof}

\begin{lemma}
\label{lem:par-pucci-sol-nondeg-2}
Let $\tau\ge0$ and $1<\gamma<2$ be given. Then, there exist small constants $\ka_*>0$ and $\eta>0$, depending only on $n,\la,\Lambda,\tau,\gamma$, such that if $u$ solves \eqref{eq:nondeg-cond}, then 
$$
u(x,0)\ge c(n,\la,\Lambda,\gamma)\dist(x,\p D_{L,1})^\gamma\quad\text{for all }x\in D_{L,1/64}.
$$
\end{lemma}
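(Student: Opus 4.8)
The plan is to bootstrap the \emph{linear} lower bound of Lemma~\ref{lem:par-pucci-sol-nondeg-1} to a $\gamma$-homogeneous one by iterating it along a geometric sequence of parabolic scales at the boundary, keeping track of how the normalization $u(e_n/2,-\ka_*)\ge1$ deteriorates under rescaling. The mechanism inside a single scale is a power barrier: with $\psi$ a mollification of the distance to the tilted graph $\{x_n=f(x')-\tau t\}$, the function $\phi:=c\,\psi^\gamma$ satisfies $\cM^-_{\la,\Lambda}(D^2\phi)-\partial_t\phi\ge0$ in a one-sided layer $\{0<\psi\le\psi_0\}$, because for $\gamma>1$ the Hessian $D^2\phi$ has the dominant positive rank-one term $c\gamma(\gamma-1)\psi^{\gamma-2}D\psi\otimes D\psi$ while the errors from the tilt $\tau$ and from $D^2$ of the mollified $f$ stay subordinate once $\psi\le\psi_0$; hence $u\ge\phi$ by comparison as soon as $u\ge\phi$ on the outer face $\{\psi=\psi_0\}$, which is exactly what Lemma~\ref{lem:par-pucci-sol-nondeg-1} supplies. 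The admissible thickness $\psi_0$ of this layer shrinks as $\gamma\to1^+$, which is the ultimate source of the $\gamma$-dependent smallness of $\eta$ in the statement.

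Concretely, fix $x_0\in D_{L,1/64}$ and set $\delta:=\dist(x_0,\partial D_{L,1})$; since $x_0$ is well inside $B_1$ the distance is realized at a graph point $p^*=((x_0)',f((x_0)'))$, and $\delta\le\delta':=(x_0)_n-f((x_0)')<1/32$, so it suffices to prove $u(x_0,0)\ge c\,\delta'^{\,\gamma}$. When $\delta'\gtrsim\eta$ one applies Lemma~\ref{lem:par-pucci-sol-nondeg-1} directly to $u$ at $x_0$ (a narrow intermediate range in which the Cartesian height $(x_0)_n$ dips just below $2\eta$ is bridged by the weak Harnack inequality for the supersolution $u$). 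When $\delta'\ll\eta$, translate $p^*$ to the origin (so the graph passes through the origin, with the same constant $L\le\eta$) and set, for $k\ge1$, $v_k(x,t):=\sigma_k^{-1}u(\rho_0^k x,\rho_0^{2k}t)$ with $\rho_0:=32\eta$; this is a nonnegative solution of $\cM^-_{\la,\Lambda}(D^2v_k)-\partial_tv_k=0$ on an oblique Lipschitz cylinder with constant $L\le\eta$ and slope $\tau\rho_0^k\le\tau$, so the constants of Lemma~\ref{lem:par-pucci-sol-nondeg-1}, which may be chosen valid for all slopes in $[0,\tau]$, apply uniformly. The normalizers $\sigma_k$ are defined recursively so that hypothesis (c) of Lemma~\ref{lem:par-pucci-sol-nondeg-1} holds for every $v_k$: the value of $u$ at the point $\rho_0^k e_n/2$ at time $-\rho_0^{2k}\ka_*$ — which lies in the admissible set precisely because $4\eta\le\rho_0\le1/8$ and $\rho_0^2\ka_*\le(64\eta)^2$ — is bounded below by the linear \emph{conclusion} of Lemma~\ref{lem:par-pucci-sol-nondeg-1} at the \emph{previous} scale $v_{k-1}$ (and, for $k=1$, directly from the hypothesis on $u$), which yields $\sigma_1\asymp c_*\rho_0$ and $\sigma_k=\tfrac{c_*\rho_0}{2}\sigma_{k-1}$, i.e.\ $\sigma_k\asymp(c_*\rho_0/2)^k$.

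The value $\rho_0=32\eta$ is forced by two requirements acting in opposite directions: each scale transition above must be admissible, which needs $\rho_0\in[4\eta,\min(1/8,64\eta/\sqrt{\ka_*})]$; and the dyadic annuli $\{2\eta\rho_0^k\le\delta'\le\rho_0^k/16\}$, $k\ge0$, must cover $(0,1/16]$ without gaps, which needs $\rho_0\ge32\eta$ — and a smaller $\rho_0$ is preferable, since it enlarges $\log(1/\rho_0)$. With this choice, for the unique $k\ge1$ with $\delta'\in[2\eta\rho_0^k,\rho_0^k/16]$ one has $\delta'/\rho_0^k\in[2\eta,1/16]$, and applying Lemma~\ref{lem:par-pucci-sol-nondeg-1} to $v_k$ at the image $(0,\delta'/\rho_0^k)$ of $x_0$ and unwinding the scalings gives
$$
u(x_0,0)\ \ge\ \sigma_k\,c_*\,\frac{\delta'}{\rho_0^k}\ \gtrsim\ c_*\Big(\frac{c_*}{2}\Big)^{k}\delta'\ \gtrsim\ c_*\,\delta'^{\,1+\sigma},\qquad \sigma:=\frac{\log(2/c_*)}{\log(1/\rho_0)},
$$
where we used $k\le\log(1/(16\delta'))/\log(1/\rho_0)$. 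Since $c_*$ depends only on $n,\la,\Lambda,\tau$ while $\log(1/\rho_0)=\log(1/(32\eta))\to\infty$ as $\eta\to0$, one fixes $\eta$ small (depending on $n,\la,\Lambda,\tau,\gamma$) so that $\sigma\le\gamma-1$; then $\delta'^{\,1+\sigma}\ge\delta'^{\,\gamma}\ge\delta^\gamma$ because $\delta\le\delta'\le1$, and the lemma follows.

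I expect the main obstacle to be the recursive verification of hypothesis (c): it must be deduced at scale $\rho_0^k$ from the conclusion of the lemma at scale $\rho_0^{k-1}$, and this couples the narrow admissible window for the scale ratio $\rho_0$ with the tiling requirement on the corresponding annuli, thereby fixing $\rho_0$ in terms of $\eta$ and forcing the $\gamma$-dependent choice of $\eta$ needed so that the accumulated loss $(c_*/2)^k$ — which behaves like $\delta'^{\,\sigma}$ with $\sigma=\log(2/c_*)/\log(1/\rho_0)$ — does not overwhelm the gain $\delta'$. The remaining ingredients (matching Cartesian height against distance-to-graph through the translations, the benign decay of the slope parameter $\tau\rho_0^k$, and the small Harnack bridge between the direct and iterated ranges) are routine bookkeeping.
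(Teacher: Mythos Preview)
Your concrete argument (the second through fourth paragraphs) is essentially the paper's own proof: both iterate Lemma~\ref{lem:par-pucci-sol-nondeg-1} at the geometric sequence of scales $\rho_0^k$ (the paper takes $r_1=64\eta$ rather than your $32\eta$), track the accumulated loss $(c_*/2)^k$ in the normalization, convert it to $\delta'^{\,\sigma}$ with $\sigma=\log(2/c_*)/\log(1/\rho_0)$, and then choose $\eta$ small so that $\sigma\le\gamma-1$; your bookkeeping on the tiling condition and on the decay of the slope $\tau\rho_0^k$ is in fact a bit more explicit than the paper's. The power-barrier mechanism $\phi=c\psi^\gamma$ in your opening paragraph is neither used in your concrete argument nor present in the paper's proof --- it is a separate (and plausible) route, but here it is a digression that you should drop to avoid confusing the reader about what is actually being done.
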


\begin{proof}
Let $c_*=c_*(n,\la,\Lambda,\tau)>0$ be as in Lemma~\ref{lem:par-pucci-sol-nondeg-1}. One can easily check in its proof that $c_*$ was chosen independently of $\eta$. We let $r_1:=64\eta$ for small $\eta>0$ to be determined later. Particularly, we ask $\eta$ to be smaller than the one in Lemma~\ref{lem:par-pucci-sol-nondeg-1}. We claim that if $x=(x',x_n)\in\p D_{L,1/64}$, then \begin{align}
    \label{eq:nondeg-induc}
    u(x',\rho,s)\ge\left(\frac{c_*}2\right)^{k+1}(\rho-x_n)
\end{align}
whenever $x_n+2\eta r_1^k\le\rho\le x_n+r_1^k/16$ and $-r_1^{2(k+1)}\le s\le0$ for some $k\in\N$. Clearly, $(x',\rho,s)\in E_{L,1/16,\tau}$ holds for small $r_1$ (or, equivalently, for small $\eta$). We prove the claim by induction on $k\in\N$. 

For $k=1$, we let $\ka_*=\ka_*(n,\la,\Lambda,\tau)>0$ be as in Lemma~\ref{lem:par-pucci-sol-nondeg-1} and consider 
$$
u_1(y,t):=\frac{u(x+r_1y,r_1^2t)}{u(x+\frac{r_1}2e_n,-r_1^2\ka_*)},\quad (y,t)\in E_{L,1,\tau}.
$$
Here, the set $E_{L,1,\tau}$ may be equipped with a different Lipschitz function $f$ from the one in \eqref{eq:nondeg-cond}. Clearly, $u_1$ satisfies the conditions $(a)-(d)$ in \eqref{eq:nondeg-cond}. Applying Lemma~\ref{lem:par-pucci-sol-nondeg-1} to $u$ and $u_1$ gives 
\begin{align*}
    &u\left(x+\frac{r_1}2e_n,-r_1^2\ka_*\right)\ge c_*\left(x_n+\frac{r_1}2\right)\ge\frac{c_*}2r_1,\\
    &u_1(0,y_n,t)\ge c_*y_n\quad\text{for }2\eta\le y_n\le 1/16,\,-r_1^2\le t\le0.
\end{align*}
By putting $\rho=x_n+r_1y_n$ and $s=r_1^2t$, we obtain that \begin{align*}
    u(x',\rho,s)&=u(x',x_n+r_1y_n,r_1^2t)=u_1(0,y_n,t)u\left(x+\frac{r_1}2e_n,-r_1^2\ka_*\right)\\
    &\ge\frac{c_*^2}2(r_1y_n)\ge\left(\frac{c_*}2\right)^2(\rho-x_n)
\end{align*}
for $x_n+2\eta r_1\le\rho\le x_n+r_1/16$ and $-r_1^4\le s\le 0$. This proves \eqref{eq:nondeg-induc} for $k=1$.

Now we suppose that \eqref{eq:nondeg-induc} holds for $k-1$, and prove it for $k$. For this aim, we define $$
u_k(y,t):=\frac{u(x+r_1^ky,r_1^{2k}t)}{u\left(x+\frac{r_1^k}2e_n,-r_1^{2k}\ka_*\right)},\quad(y,t)\in E_{L,1,\tau}.
$$
From $2\eta r_1^{k-1}\le\frac{r_1^k}2\le\frac{r_1^{k-1}}{16}$, we have by the induction hypothesis 
$$
u\left(x+\frac{r_1^k}2e_n,-r_1^{2k}\ka_*\right)\ge\left(\frac{c_*}2\right)^k\left(\frac{r_1^k}2\right).
$$
Since $u_k$ satisfies $(a)-(d)$ in \eqref{eq:nondeg-cond}, we can apply Lemma~\ref{lem:par-pucci-sol-nondeg-1} to obtain that 
$$
u_k(0,y_n,t)\ge c_*y_n\quad\text{for }2\eta\le y_n\le 1/16,\,-r_1^2\le t\le 0.
$$
By putting $\rho=x_n+r_1^ky_n$ and $s=r_1^{2k}t$, we deduce that \begin{align*}
    u(x',\rho,s)&=u(x',x_n+r_1^ky_n,r_1^{2k}t)=u_k(0,y_n,t)u\left(x+\frac{r_1^k}2e_n,-r_1^{2k}\ka_*\right)\\
    &\ge\left(\frac{c_*}2\right)^{k+1}(r_1^ky_n)=\left(\frac{c_*}2\right)^{k+1}(\rho-x_n).
\end{align*}
This finishes the proof for the claim \eqref{eq:nondeg-induc}.

Now, we are ready to prove Lemma~\ref{lem:par-pucci-sol-nondeg-2}. To this end, we fix $x=(x',x_n)\in\p D_{L,1/64}$ and let $\rho\in(x_n,2\eta)$. Then we can take $k\in\N$ such that $2\eta r_1^k\le\rho-x_n\le r_1^k/16$. By \eqref{eq:nondeg-induc}, $$
u(x',\rho,0)\ge\left(\frac{c_*}2\right)^{k+1}(\rho-x_n).
$$
From $r_1^k\le\frac{\rho-x_n}{2\eta}$, we have$$
\left(\frac{c_*}2\right)^k\ge\left(\frac{c_*}2\right)^{\log_{r_1}\left(\frac{\rho-x_n}{2\eta}\right)}=\left(\frac{\rho-x_n}{2\eta}\right)^{\log_{r_1}\left(\frac{c_*}2\right)}
$$
If $\eta=\eta(n,\la,\Lambda,\tau,\gamma)$ is small enough, then
$$
\log_{r_1}\left(\frac{c_*}2\right)=\frac{\log\left(\frac{c_*}2\right)}{\log(64\eta)}\in(0,\gamma-1).
$$
Thus, we have for $x_n<\rho<2\eta$\begin{align*}
    u(x',\rho,0)&\ge\left(\frac{c_*}2\right)^k\frac{c_*}2(\rho-x_n)\ge\left(\frac{\rho-x_n}{2\eta}\right)^{\gamma-1}\frac{c_*}2(\rho-x_n)\\
    &\ge c(n,\la,\Lambda,\tau,\gamma)(\rho-x_n)^\gamma.
\end{align*}
On the other hand, when $(x',\rho)\in D_{L,1/64}$ with $\rho\ge2\eta$, we can simply use Lemma~\ref{lem:par-pucci-sol-nondeg-1} to obtain $$
u(x',\rho,0)\ge c_*\rho\ge c_*(\rho-x_n)^\gamma.$$
Therefore, we conclude that if $(x',\rho)\in D_{L,1/64}$ (and $x=(x',x_n)\in \p D_{L,1/64}$), then $$
u(x',\rho,0)\ge c(\rho-x_n)^\gamma\ge c\dist((x',\rho),\p D_{L,1})^\gamma.
$$
This completes the proof.
\end{proof}

\section*{Declarations}

\noindent {\bf  Data availability statement:} All data needed are contained in the manuscript.

\medskip
\noindent {\bf  Funding and/or Conflicts of interests/Competing interests:} The authors declare that there are no financial, competing or conflict of interests.


\end{document}